\documentclass{amsart}
\usepackage[utf8]{inputenc}


\usepackage{hyperref}
\usepackage{amsmath}
\usepackage{amsmath,amsthm,amssymb,amsfonts}
\usepackage[utf8]{inputenc}
\usepackage{enumerate}
\usepackage{graphicx}
\usepackage{enumitem}
\usepackage{tikz}
\usepackage[numbers]{natbib}

\theoremstyle{plain}
\newtheorem{theorem}{Theorem}[section]
\newtheorem{lemma}[theorem]{Lemma}
\newtheorem{claim}[theorem]{Claim}
\newtheorem{corollary}[theorem]{Corollary}
\newtheorem{proposition}[theorem]{Proposition}

\newtheorem{question}[theorem]{Question}

\newtheorem{fact}[theorem]{Fact}

\theoremstyle{definition}
\newtheorem{definition}[theorem]{Definition}

\newtheorem{remark}[theorem]{Remark}

\newcommand{\Q}{\mathbb{Q}}
\newcommand{\R}{\mathbb{R}}

\newcommand{\Z}{\mathbb{Z}}
\DeclareMathOperator{\hp}{Homeo^+}

\DeclareMathOperator{\fix}{Fix}

\DeclareMathOperator{\aut}{Aut}
\DeclareMathOperator{\id}{id}
\DeclareMathOperator{\conv}{conv}

\begin{document}
  
  \title{The structure of random automorphisms of the rational numbers}
	\author[U. B. Darji]{Udayan B. Darji}
	\address{Department of Mathematics, University of Louisville,
		Louisville, KY 40292, USA\\Ashoka University, Rajiv Gandhi Education City, Kundli, Rai 131029, India} 
	\email{ubdarj01@louisville.edu}
	\urladdr{http://www.math.louisville.edu/\!$\tilde{}$ \!\!darji}
	\author[M. Elekes]{M\'arton Elekes}
	\address{Alfr\'ed R\'enyi Institute of Mathematics, Hungarian Academy of Sciences,
		PO Box 127, 1364 Budapest, Hungary and E\"otv\"os Lor\'and
		University, Institute of Mathematics, P\'azm\'any P\'eter s. 1/c,
		1117 Budapest, Hungary}
	\email{elekes.marton@renyi.mta.hu}
	\urladdr{www.renyi.hu/ \!$\tilde{}$ \!\!emarci}
	
	\author[K. Kalina]{Kende Kalina}
		\address{E\"otv\"os Lor\'and
		University, Institute of Mathematics, P\'azm\'any P\'eter s. 1/c,
		1117 Budapest, Hungary}
       \email{kkalina@cs.elte.hu} 
	\author[V. Kiss]{Viktor Kiss}
	\address{Alfr\'ed R\'enyi Institute of Mathematics, Hungarian Academy of Sciences,
		PO Box 127, 1364 Budapest, Hungary and E\"otv\"os Lor\'and
		University, Institute of Mathematics, P\'azm\'any P\'eter s. 1/c,
		1117 Budapest, Hungary}
	\email{kiss.viktor@renyi.mta.hu}
	
	\author[Z. Vidny\'anszky]{Zolt\'an Vidny\'anszky}

	\address{Kurt Gödel Research Center for Mathematical Logic, Universität Wien, Währinger Strasse 25, 1090 Wien, Austria and Alfr\'ed 	R\'enyi Institute of Mathematics, Hungarian Academy of Sciences,
		PO Box 127, 1364 Budapest}
 	\email{zoltan.vidnyanszky@univie.ac.at}
  \urladdr{www.logic.univie.ac.at/$\tilde{}$ vidnyanszz77}

  \subjclass[2010]{Primary 03E15, 22F50; Secondary 03C15, 28A05, 54H11, 28A99}
  \keywords{Key Words: non-locally compact Polish group, Haar null, Christensen, shy, prevalent, typical element, automorphism group, compact catcher, conjugacy class} 
  
	\thanks{The second, fourth and fifth authors were partially supported by the
		National Research, Development and Innovation Office
		-- NKFIH, grants no.~113047, no.~104178 and no.~124749. The fifth author was also partially supported by FWF Grant P29999.}
  \begin{abstract}
    In order to understand the structure of the ``typical'' element of an automorphism group, one has to study how large the conjugacy classes of the group are. For the case when typical is meant in the sense of Baire category, Truss proved that there is a co-meagre conjugacy class in $\aut(\Q, <)$, the automorphism group of the rational numbers. Following Dougherty and Mycielski we investigate the measure theoretic dual of this problem, using Christensen's notion of Haar null sets. We give a complete description of the size of the conjugacy classes of $\aut(\Q, <)$ with respect to this notion. In particular, we show that there exist continuum many non-Haar null conjugacy classes, illustrating that the random behaviour is quite different from the typical one in the sense of Baire category.
  \end{abstract}

  \maketitle
  
\section{Introduction}
  
  The study of typical elements of Polish groups is a flourishing field with a large number of applications. The systematic investigation of typical elements of automorphism groups of countable structures was initiated by Truss \cite{truss1992generic}, in particular, he proved that there is a co-meagre conjugacy class in $\aut(\Q, <)$, the automorphism group of the rational numbers.  Kechris and Rosendal \cite{KechrisRosendal} characterised the existence of a co-meagre conjugacy class in model theoretic terms, and investigated the relation between the existence of co-meagre conjugacy classes in every dimension and other group theoretic properties, such as the small index property, uncountable cofinality, automatic continuity and Bergman's property. 
  
  Thus, it is natural to ask whether there exist measure theoretic analogues of these results. Unfortunately, on non-locally compact groups there is no natural invariant $\sigma$-finite measure. However, a generalisation of the ideal of measure zero sets can be defined in every Polish group as follows:
  
  \begin{definition}[Christensen, \cite{originalhaarnull}]
    \label{d:haarnull}
    Let $G$ be a Polish group and $B \subset G$ be Borel. We say that $B$ is \textit{Haar null} if there exists a Borel probability measure $\mu$ on $G$ such that for every $g,h \in G$ we have $\mu(gBh)=0$. 
    An arbitrary set $S$ is called Haar null if $S \subset B$ for some Borel Haar null set $B$.
  \end{definition}
  
  It is known that the collection of Haar null sets forms a $\sigma$-ideal in every Polish group and it coincides with the ideal of measure zero sets in locally compact groups with respect to every left (or equivalently right) Haar measure. Using this definition, it makes sense to talk about the properties of random elements of a Polish group. A property $P$ of elements of a Polish group $G$ is said to \textit{hold almost surely} or \emph{almost every element of G has property $P$} if the set $\{g \in G: g \text{ has property } P\}$ is co-Haar null. 
  
  Dougherty and Mycielski \cite{DM} used the notion of Haar null sets to examine $S_\infty$, the permutation group of the countably infinite set. They gave a complete description of the non-Haar null conjugacy classes and the (conjugacy invariant) properties of a random element of $S_\infty$. It follows from their characterisation that almost every element in $S_\infty$ has finitely many finite and infinitely many infinite orbits, where a set of the form $\{g^n(x) : n \in \Z\}$ is called an \emph{orbit} of $g$. In \cite{auto}, the authors of the current paper initiated a study of the size of the conjugacy classes of automorphism groups of general countable structures. In particular, they characterised those automorphism groups in which almost every element has finitely many finite and infinitely many infinite orbits. 
  
  Their results, however, fall short of giving a complete description of the size of the conjugacy classes of automorphism groups. The aim of the current paper is to extend the results of \cite{auto} for $\aut(\Q, <)$, the automorphism group of the rational numbers, by characterising the non-Haar null conjugacy classes. 
  We would like to point out that a similar characterisation result can
  be proved for $\aut(\mathcal{R})$, the automorphism group of the random graph (see \cite{autrcikk}). Interestingly, the proof is completely different, hence
  the following question is very natural:
  
  \begin{question}
    Is it possible to unify these proofs? Are there necessary and sufficient model theoretic conditions which characterise the measure theoretic behaviour of the conjugacy classes?
  \end{question}
  
  The paper is organised as follows. In Section \ref{s:prel} we collect the notions and facts we will use. We also state a result from \cite{auto}, and prove a generalisation for a result of Christensen that we will need. In Section \ref{s:autQ} we formulate and prove the main result of the paper by characterising the non-Haar null conjugacy classes of $\aut(\Q, <)$.

  \section{Preliminaries and notations}
  \label{s:prel}
  
  We will follow the notations of \cite{kechrisbook}. For a detailed introduction to the theory of Polish groups see \cite[Chapter 1]{becker1996descriptive}, while the model theoretic background can be found in \cite[Chapter 7]{hodges}. Nevertheless, we summarise the basic facts which we will use. 
  
  We think of $S_\infty$ as the permutation group of $\omega$. With the topology of pointwise convergence, it is a Polish group. Let $G$ be a closed subgroup of $S_\infty$. The \emph{orbit} of an element $x \in \omega$ (with respect to $g \in G$) is the set $\{g^n(x) : n \in \Z\}$. For a set $S \subset \omega$ we denote the \emph{pointwise stabiliser} of $S$ by $G_{(S)}$, that is, $G_{(S)} = \{g \in G : \forall s \in S\; (g(s) = s)\}$. In case $S = \{x\}$, we write $G_{(x)}$ instead of $G_{(\{x\})}$. As usual, we use the notation $G(x) = \{g(x) : g \in G\}$
  
  \begin{definition} \label{NACdef} 
    Let $G$ be a closed subgroup of $S_\infty$. We say that \textit{$G$ has the finite algebraic closure property ($FACP$)} if for every finite $S \subset \omega$ the set $\{x:|G_{(S)}(x)|<\infty\}$ is finite.
  \end{definition}
  
  Generalizing the results of Dougherty and Mycielski, the following theorem is proved in \cite{auto}.
  
  \begin{theorem}
    \label{t:FACP <-> finfin -> infinf}
    Let $G \le S_\infty$ be closed. Then the following are equivalent:
    \begin{enumerate}
      \item \label{tc:finfin} almost every element of $G$ has finitely many finite orbits,
      \item \label{tc:FACP} $G$ has the $FACP$.
    \end{enumerate}
    Moreover, any of the above conditions implies that almost every element of $G$ has infinitely many infinite orbits.
  \end{theorem}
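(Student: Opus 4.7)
Suppose the $FACP$ fails: there is a finite $S \subset \omega$ such that $A := \{x \in \omega : |G_{(S)}(x)| < \infty\}$ is infinite. The pointwise stabiliser $G_{(S)}$ is the intersection of $G$ with the basic clopen neighbourhood of $\id$ consisting of permutations fixing $S$ pointwise, so $G_{(S)}$ is a nonempty open subgroup of $G$. For every $g \in G_{(S)}$ and every $x \in A$, the $g$-orbit $\{g^n(x) : n \in \Z\}$ is contained in $G_{(S)}(x)$ and is therefore finite. Since $A$ is infinite and partitions into finite $G_{(S)}$-orbits, it also splits into infinitely many (finite) $g$-orbits. Hence $G_{(S)} \subseteq \{g \in G : g \text{ has infinitely many finite orbits}\}$. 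A nonempty open subset of a Polish group is never Haar null (countably many left translates cover $G$), so (1) fails.

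\textbf{Hard direction, (2) $\Rightarrow$ (1).}
Assume the $FACP$. Let $B = \{g \in G : g \text{ has infinitely many finite orbits}\}$; we must exhibit a Borel probability measure $\mu$ on $G$ with $\mu(aBb) = 0$ for all $a,b \in G$. The plan is to construct $\mu$ as the distribution of a random element of $G$ produced by a back-and-forth construction with randomized choices, using the $FACP$ to control the possible extensions. Concretely, enumerate $\omega = \{x_0, x_1, \ldots\}$ and build a chain of finite partial automorphisms $p_0 \subseteq p_1 \subseteq \cdots$ approximating a random $g \in G$. At stage $n$, given $p_n$, we have a finite set $T_n = \dom(p_n) \cup \ran(p_n)$ and, by the $FACP$ applied to the images of $T_n$ under the closure operator $S \mapsto \{x : |G_{(S)}(x)| < \infty\}$, the ``algebraic closure'' of $T_n$ is a finite set $\widehat T_n \supseteq T_n$. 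We first extend $p_n$ deterministically to a partial isomorphism on $\widehat T_n$ in the essentially unique way dictated by $G$'s action, then extend to $p_{n+1}$ by choosing the next value from the non-algebraic part using a carefully weighted distribution that favours sending the next free point into a ``fresh'' $G$-orbit representative rather than closing up a finite cycle. The measure $\mu$ is the law of the resulting element $g \in G$.

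\textbf{Estimating $\mu(aBb)$, and the ``moreover''.}
To see $\mu(aBb) = 0$ one computes, for each $n$, the $\mu$-probability that the random $g$ has a finite orbit intersecting $\{x_0, \ldots, x_n\}$ other than those contained in the finitely many finite $G$-orbits (which exist by the $FACP$ applied to $S = \emptyset$). The weighting in the back-and-forth step is designed so that this probability is summable in $n$, so by Borel--Cantelli, almost surely $g$ has only finitely many finite orbits; a parallel estimate shows the same for $aga'$ in place of $g$, where $g = a^{-1}(aga')(a')^{-1}$, by transferring the back-and-forth control through left and right translations (this is where the construction must be symmetric enough to survive two-sided translation, which is the subtle point). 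The ``moreover'' is then obtained by the same measure: the event ``$g$ has only finitely many infinite orbits'' is contained in the event ``for some finite $F \subseteq \omega$, every point of $\omega \setminus F$ lies in one of finitely many $g$-orbits,'' which again can be controlled by the weighting, since the $FACP$ forces there to be infinitely many disjoint ``directions'' in which the random extension can grow.

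\textbf{Main obstacle.}
The genuinely difficult step is the two-sided translation invariance $\mu(aBb) = 0$, since $\mu$ is produced by a procedure that is neither left- nor right-invariant. The way this is handled is by showing that the finite/infinite orbit structure of $g$ is stable under conjugation and differs from that of $aga'$ only by finite perturbations at the boundary of $\supp(a) \cup \supp(a')$; combined with the $FACP$-based finiteness of the algebraic closures, this lets the same summable estimate bound $\mu(aBb)$ uniformly in $a,b$. Making this uniformity precise, while keeping the construction Borel and the measure genuinely a probability measure, is the technical heart of the argument.
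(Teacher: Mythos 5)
The paper does not actually prove this theorem --- it cites it from the companion paper \cite{auto} --- so there is no in-paper proof to compare against, and I will evaluate the proposal on its own terms.

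Your argument for $(1) \Rightarrow (2)$ via the contrapositive is correct and complete: if the $FACP$ fails with witness $S$, the open subgroup $G_{(S)}$ lies entirely inside $\{g : g \text{ has infinitely many finite orbits}\}$ (every $g$-orbit of a point $x$ in the infinite set $A = \{x : |G_{(S)}(x)| < \infty\}$ is trapped inside the finite set $G_{(S)}(x) \subseteq A$, and $A$ is partitioned into infinitely many of these), and a nonempty open subset of a Polish group is never Haar null.

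The direction $(2) \Rightarrow (1)$ and the ``moreover'' clause are only plans, not proofs, and at least one load-bearing idea in the plan is wrong. The claim that the finite/infinite orbit structure of $g$ ``differs from that of $aga'$ only by finite perturbations at the boundary of $\supp(a) \cup \supp(a')$'' is false: orbit structure is a \emph{conjugacy} invariant, so $aga^{-1}$ has the same orbit structure as $g$, but $aga'$ for independent $a,a'$ bears no general relation to $g$; and when $a,a'$ have infinite support (the generic case, and the only one that matters, since permutations with finite support are already handled trivially) the ``boundary of the supports'' is all of $\omega$ and your claim says nothing. The right way to tame the two-sided translation --- and a tool that is stated in this very paper --- is Lemma~\ref{l:conjugacyinvariant}: the set $B = \{g : g \text{ has infinitely many finite orbits}\}$ is conjugacy-invariant, so $B$ is Haar null iff it is left Haar null, which reduces the problem to a one-sided estimate $\mu(gB)=0$ and removes the ``subtle point'' entirely. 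Beyond that, the construction of $\mu$ and the probabilistic estimates are stated only as intentions (``one computes,'' ``the weighting is designed so that,'' ``a parallel estimate shows''), with none of the quantitative work done; you acknowledge this yourself in the final paragraph. As it stands the hard direction is a sketch, not a proof.
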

  
  Note that the automorphism group of a countable structure can be considered to be a subgroup of $S_\infty$ by identifying the domain of the structure with (a subset of) $\omega$. It is a well-known fact that such an automorphism group then becomes a closed subgroup of $S_\infty$. Applying this fact to $\aut(\Q, <)$ and noticing that it has the $FACP$, we obtain the following corollary.
  \begin{corollary}
    \label{c:fininf}
    In $\aut(\mathbb{Q}, <)$ almost every element has finitely many finite and infinitely many infinite orbits.
  \end{corollary}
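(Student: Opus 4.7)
The plan is to verify the hypothesis of Theorem \ref{t:FACP <-> finfin -> infinf} for $G = \aut(\Q, <)$ and then quote that theorem. Since automorphism groups of countable structures are closed subgroups of $S_\infty$ (after identifying the domain with $\omega$), the only real content is to check that $\aut(\Q, <)$ has the $FACP$.

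To verify $FACP$, I would take an arbitrary finite set $S = \{s_1 < s_2 < \dots < s_n\} \subset \Q$ and analyse the pointwise stabiliser $G_{(S)}$. Any $g \in G_{(S)}$ is an order automorphism of $\Q$ fixing each $s_i$, hence must map each of the convex pieces
\[
(-\infty, s_1) \cap \Q,\ (s_1, s_2) \cap \Q,\ \dots,\ (s_{n-1}, s_n) \cap \Q,\ (s_n, +\infty) \cap \Q
\]
onto itself. Conversely, any tuple of order-automorphisms of these pieces glues (together with fixing the $s_i$) to an element of $G_{(S)}$.

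Now fix $x \in \Q \setminus S$; then $x$ lies in one of the open convex pieces above, call it $I$. Both $I$ and its subsets $(I \cap \Q) \cap (-\infty, x)$ and $(I \cap \Q) \cap (x, +\infty)$ are countable dense linear orders without endpoints (or with only the relevant endpoint among $\{s_i\}$, which is fine), so by the homogeneity of $(\Q, <)$ one can, for any $y \in I \cap \Q$, build an order isomorphism of $I \cap \Q$ sending $x$ to $y$; extending by the identity outside $I$ yields an element of $G_{(S)}$ mapping $x$ to $y$. Hence $G_{(S)}(x) = I \cap \Q$ is infinite. Thus $\{x : |G_{(S)}(x)| < \infty\} \subseteq S$, which is finite, so $\aut(\Q, <)$ has the $FACP$.

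With the $FACP$ established, Theorem \ref{t:FACP <-> finfin -> infinf} applied to $G = \aut(\Q,<)$ yields both conclusions: almost every element has finitely many finite orbits (from the equivalence) and almost every element has infinitely many infinite orbits (from the ``moreover'' clause). There is no real obstacle here; the only thing to be a bit careful about is the homogeneity argument producing infinitely many images of $x$, but this is a standard back-and-forth fact about $(\Q, <)$.
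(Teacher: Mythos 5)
Your proposal follows exactly the paper's route: identify $\aut(\Q,<)$ as a closed subgroup of $S_\infty$, verify the $FACP$, and apply Theorem \ref{t:FACP <-> finfin -> infinf}. The paper simply states that $\aut(\Q,<)$ has the $FACP$ without the back-and-forth verification you spell out, but the argument is the same and your details are correct.
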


  Let us consider the following notion of largeness:
  
  \begin{definition}
    \label{d:catcherbiter}
    Let $G$ be a Polish topological group. A set $A \subset G$ is called \textit{compact catcher} if for every compact $K \subset G$ there exist $g,h \in G$ so that $gKh \subset A$.  $A$ is \textit{compact biter} if for every non-empty compact $K \subset G$ there exist an open set $U$ and $g,h \in G$ so that $U \cap K \not = \emptyset$, and $g(U \cap K)h \subset A$. 
  \end{definition}
  The following well-known observation is one of the most useful tools to prove that a certain set is not Haar null (for a proof, see e.g. \cite{auto}).
  \begin{fact}
    \label{f:biter}
    If $A$ is compact biter then it is not Haar null.
  \end{fact}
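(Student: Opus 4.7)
The plan is a proof by contradiction. Suppose $A$ is both compact biter and Haar null, so $A \subset B$ for some Borel set $B$ and there is a Borel probability measure $\mu$ on $G$ with $\mu(gBh) = 0$ for every $g, h \in G$. I aim to derive a contradiction by applying the biter property to a carefully chosen compact set.

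First, since every Borel probability measure on a Polish space is tight, I would pick a compact $K \subset G$ with $\mu(K) > 0$. This $K$ is not yet good enough for a direct application: although $\mu(K) > 0$, the compact biter property only guarantees that some relatively open piece $U \cap K$ is sent into $A$, and this piece could a priori be a $\mu$-null part of $K$, giving no measure-theoretic information. What I really need is a compact set in which every nonempty relatively open subset has positive $\mu$-measure. The standard way to obtain this is to strip away the $\mu$-null open part of $K$: fix a countable basis $\{V_n\}_{n \in \N}$ of $G$ and set
\[
W = \bigcup\{V_n : \mu(V_n \cap K) = 0\}, \qquad K' = K \setminus W.
\]
Countable additivity gives $\mu(W \cap K) = 0$, so $K'$ is a nonempty compact set with $\mu(K') = \mu(K) > 0$. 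A short argument then shows that every open $U$ with $U \cap K' \neq \emptyset$ contains some basic $V_m$ meeting $K'$; by the definition of $W$ this $V_m$ satisfies $\mu(V_m \cap K) > 0$, and since $\mu(V_m \cap W \cap K) = 0$ we get $\mu(U \cap K') \geq \mu(V_m \cap K') = \mu(V_m \cap K) > 0$.

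With this $K'$ in hand, the compact biter property produces an open $U$ and elements $g, h \in G$ satisfying $U \cap K' \neq \emptyset$ and $g(U \cap K')h \subset A \subset B$. Equivalently $U \cap K' \subset g^{-1}Bh^{-1}$, so
\[
\mu(g^{-1}Bh^{-1}) \geq \mu(U \cap K') > 0,
\]
contradicting the Haar nullness of $B$. The only nonroutine step is the passage from $K$ to $K'$; once the correct compact set is identified, the rest is immediate from the definitions.
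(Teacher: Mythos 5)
Your proof is correct and follows the standard argument (the one cited in the paper via \cite{auto}): use tightness to find a compact set of positive measure, pass to the subset on which $\mu$ has full relative support so that every relatively open piece has positive measure, and then apply the biter property to get a contradiction with Haar nullness. The only nit is that the bound $\mu(W \cap K) = 0$ follows from countable \emph{sub}additivity rather than additivity, but this does not affect the argument.
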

  
  We remark here that the proof of Dougherty and Mycielski about $S_\infty$ actually shows that every non-Haar null conjugacy class is compact biter and the unique non-Haar null conjugacy class which contains elements without finite orbits is compact catcher. We prove a similar statement about $\aut(\Q, <)$ in Section \ref{s:autQ}.
  
  It is sometimes useful to consider right and left Haar null sets: a Borel set $B$ is \emph{right (resp. left) Haar null} if there exists a Borel probability measure $\mu$ on $G$ such that for every $g \in G$
  we have $\mu(Bg)=0$ (resp. $\mu(gB)=0$). An arbitrary set $S$ is called \emph{right (resp. left) Haar null} if $S \subset B$ for some Borel right (resp. left) Haar null set $B$. The following observation will be used several times (for a proof, see e.g. \cite{auto}).
  
  \begin{lemma}
    \label{l:conjugacyinvariant}
    Suppose that $B$ is a Borel set that is invariant under conjugacy. Then $B$ is left Haar null iff it is right Haar null iff it is Haar null.
  \end{lemma}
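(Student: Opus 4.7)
The plan is to reduce the entire lemma to a single algebraic observation: a subset $B \subseteq G$ is invariant under conjugacy precisely when $kB = Bk$ for every $k \in G$, or equivalently $gBg^{-1} = B$ for every $g \in G$. Once this is in hand, two-sided translates of $B$ coincide with one-sided translates, and the three largeness notions collapse.

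First I would record the easy direction. Directly from the definition, if $\mu$ witnesses that $B$ is Haar null (so $\mu(gBh)=0$ for all $g,h$), then taking $h=e$ shows $B$ is left Haar null, and taking $g=e$ shows $B$ is right Haar null. This needs no hypothesis on $B$.

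For the converse, suppose $B$ is Borel and conjugacy invariant, and let $\mu$ be a Borel probability measure witnessing that $B$ is right Haar null, i.e.\ $\mu(Bk)=0$ for all $k \in G$. For arbitrary $g, h \in G$ I would write
\[
gBh \;=\; (gBg^{-1})(gh) \;=\; B(gh),
\]
using conjugacy invariance in the second equality. Hence $\mu(gBh)=\mu(B(gh))=0$, so the same $\mu$ witnesses that $B$ is Haar null. The argument for the left Haar null case is symmetric: if $\mu(kB)=0$ for all $k$, then
\[
gBh \;=\; (gh)(h^{-1}Bh) \;=\; (gh)B,
\]
so $\mu(gBh)=0$ again. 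Chaining these implications yields the three-way equivalence.

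There is no real obstacle; the only thing to be slightly careful about is that the definition from Christensen asks the witnessing measure to vanish on \emph{every} two-sided (respectively one-sided) translate simultaneously, so one must verify that the \emph{same} measure $\mu$ used for the one-sided condition works for the two-sided condition. The displayed identities above show exactly this, because they rewrite an arbitrary two-sided translate $gBh$ as a one-sided translate by the single element $gh$, which ranges over all of $G$ as $g,h$ do.
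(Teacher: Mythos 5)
Your proof is correct and complete: the identities $gBh = B(gh)$ and $gBh = (gh)B$, valid precisely because conjugacy invariance gives $gBg^{-1}=B$ for every $g$, reduce two-sided translates to one-sided translates by $gh$, and together with the trivial implications obtained by setting $g=e$ or $h=e$ they close the three-way equivalence. The paper does not prove this lemma inline (it cites the companion paper for it), but your argument is exactly the standard reduction and is the one the reference uses.
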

  
  We will use the following well-known fact (for the case $G = \aut(\Q, <)$) throughout the paper. 
  \begin{fact}
    Suppose that $G \le S_\infty$ is a closed subgroup, and $K \subseteq G$. Then $K$ is compact if and only if for each $x \in \omega$ the sets $\{g(x) : g \in K\}$ and $\{g^{-1}(x) : g \in K\}$ are finite. 
  \end{fact}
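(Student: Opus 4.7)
The plan is to use that the topology on $S_\infty$ is the restriction of the product topology on $\omega^\omega$ with $\omega$ discrete, together with Tychonoff's theorem. Throughout I interpret the statement with $K$ closed in $G$ (otherwise one replaces $K$ by its closure, noting that $\{g(x):g\in K\}$ and $\{g^{-1}(x):g\in K\}$ do not change under taking closure, because $\omega$ is discrete and the evaluation maps are continuous).

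For the forward direction, assume $K$ is compact. For each $x\in\omega$ the evaluation map $g\mapsto g(x)$ from $G$ to $\omega$ is continuous by the very definition of the topology on $S_\infty$, so its image on $K$ is a compact subset of the discrete space $\omega$, hence finite. Since inversion is continuous on $S_\infty$, the same argument applied to $g\mapsto g^{-1}(x)$ yields finiteness of $\{g^{-1}(x):g\in K\}$.

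For the reverse direction, set $A_x=\{g(x):g\in K\}$ and $B_x=\{g^{-1}(x):g\in K\}$, finite by assumption. Consider the embedding $\iota\colon S_\infty\to\omega^\omega\times\omega^\omega$ given by $\iota(g)=(g,g^{-1})$; because inversion is continuous on $S_\infty$, this map is a homeomorphism onto its image. Then $\iota(K)\subseteq P=\prod_xA_x\times\prod_xB_x$, and $P$ is compact by Tychonoff. The key step I would then carry out is to verify that the closure of $\iota(K)$ in $\omega^\omega\times\omega^\omega$ still lies in $\iota(S_\infty)$: given $(f,h)$ in that closure, choose a sequence $g_n\in K$ with $(g_n,g_n^{-1})\to(f,h)$ pointwise (sequences suffice since $S_\infty$ is Polish, hence metrizable). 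Because each $A_x$ and $B_x$ is finite and $\omega$ is discrete, the values $g_n(x)$ and $g_n^{-1}(x)$ stabilise to $f(x)$ and $h(x)$ respectively, so passing to the limit in $g_n\circ g_n^{-1}=\id=g_n^{-1}\circ g_n$ yields $f\circ h=h\circ f=\id$. Hence $f\in S_\infty$ with $f^{-1}=h$, so the closure of $\iota(K)$ stays inside $\iota(S_\infty)$. Pulling back, the closure of $K$ in $S_\infty$ is compact; since $G\le S_\infty$ is closed, this closure lies in $G$, and under our standing assumption that $K$ is closed it coincides with $K$.

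The main obstacle is this last closure step: a pointwise limit of bijections need not itself be a bijection. This is precisely why one needs the hypothesis on the preimages $\{g^{-1}(x):g\in K\}$ in addition to the one on images; the image condition alone guarantees a well-defined limit function $f$ but cannot secure surjectivity, whereas controlling $g_n^{-1}$ in parallel forces the limit to possess an inverse and so remain inside $S_\infty$.
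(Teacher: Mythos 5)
Your proof is correct. The paper states this Fact as a well-known result without giving a proof, so there is no argument of the authors' to compare against; what you wrote is the standard argument. Your plan is exactly right: the forward direction is immediate from continuity of the evaluation maps $g\mapsto g(x)$ and $g\mapsto g^{-1}(x)$ together with discreteness of $\omega$, and for the reverse direction you embed $K$ via $g\mapsto(g,g^{-1})$ into the Tychonoff-compact box $\bigl(\prod_x A_x\bigr)\times\bigl(\prod_x B_x\bigr)\subset\omega^\omega\times\omega^\omega$ and check the closure of the image stays inside $\iota(S_\infty)$. You also correctly flag the two points that need care: the statement implicitly requires $K$ to be closed (a non-closed set satisfying the finiteness conditions is not compact), and the genuine content of the closure step is that a pointwise limit of bijections need not be a bijection, which is exactly what the hypothesis on $\{g^{-1}(x):g\in K\}$ repairs.

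Two small remarks, neither of which is a gap. First, when you invoke metrizability to justify working with sequences, the space that needs to be metrizable is $\omega^\omega\times\omega^\omega$, where the closure of $\iota(K)$ is being taken, not $S_\infty$; both are Polish, so the step is fine, but the justification should reference the ambient product. Second, the image condition alone already makes the pointwise limit $f$ injective (for $x\neq y$, pick $n$ large enough that $g_n$ agrees with $f$ at both $x$ and $y$; then $f(x)=g_n(x)\neq g_n(y)=f(y)$), so the role of the finiteness of the $B_x$ is precisely to force surjectivity, which is what you say at the end.
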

 
\subsection{Christensen's theorem revisited}
  \label{s:chris}
  
  We will need a straightforward generalisation of a theorem proved by Christensen \cite{originalhaarnull}, here we reiterate Rosendal's proof (see \cite{zrosendal}).
  
  \begin{theorem} (Christensen) 
    \label{t:null}
    Let $A \subset G$ be a conjugacy invariant set and suppose that there exists a cover of $A$ by Borel sets $A=\bigcup_{n \in \omega} A_n$ (in particular, $A$ is also a Borel set) and a conjugacy invariant set $B$ so that $1 \in \overline{B}$ and $B \cap \bigcup_{n \in \omega} A_n^{-1}A_n=\emptyset$. Then $A$ is Haar null.
  \end{theorem}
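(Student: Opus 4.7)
The plan is to reduce to proving each Borel piece $A_n$ is left Haar null. Since the left Haar null sets form a $\sigma$-ideal, this gives that $A=\bigcup_n A_n$ is left Haar null, and Lemma \ref{l:conjugacyinvariant} (applicable because $A$ is conjugacy invariant) then promotes this to Haar null. So I fix $n$ and write $A':=A_n$; the task becomes producing a single Borel probability measure $\mu$ on $G$ with $\mu(gA')=0$ for every $g\in G$.

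To construct $\mu$ I will use that $B$ is conjugacy invariant and that $1\in\overline{B}$. Pick $b_k\in B$ converging to $1$ sufficiently rapidly in a compatible left-invariant metric so that the infinite product $\phi(\varepsilon):=b_1^{\varepsilon_1}b_2^{\varepsilon_2}\cdots$ defines a continuous map $\phi:\{0,1\}^{\omega}\to G$. Let $\lambda$ denote the Bernoulli$(1/2)$ measure on $\{0,1\}^\omega$ and set $\mu:=\phi_*\lambda$.

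Fix $g\in G$ and put $S:=\phi^{-1}(gA')$. The core algebraic observation is that $S\cap\tau_k(S)=\emptyset$ for every $k\ge 1$, where $\tau_k$ denotes the involution flipping the $k$-th coordinate. Indeed, if $\varepsilon\in S$ and $\tau_k\varepsilon\in S$ (WLOG $\varepsilon_k=0$), write $\phi(\varepsilon)=ga$ and $\phi(\tau_k\varepsilon)=ga'$ with $a,a'\in A'$, and split $\phi(\varepsilon)=xy$ at the $k$-th coordinate so that $\phi(\tau_k\varepsilon)=xb_ky$; then $a^{-1}a'=\phi(\varepsilon)^{-1}\phi(\tau_k\varepsilon)=y^{-1}b_ky$ is a conjugate of $b_k\in B$, hence lies in $B$ by its conjugacy invariance. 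But $a^{-1}a'\in A_n^{-1}A_n$, contradicting $B\cap\bigcup_n A_n^{-1}A_n=\emptyset$.

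The final and most delicate step is deducing $\lambda(S)=0$; this is the step I expect to be the main obstacle, since the "independent-set" condition alone only forces $\lambda(S)\le 1/2$, and we need to exploit the infinite-dimensional measure-theoretic structure to kill the remaining mass. Assuming $\lambda(S)>0$, the Lebesgue density theorem on $(\{0,1\}^\omega,\lambda)$ (equivalently, L\'evy's upward martingale theorem applied to $\mathbf{1}_S$) supplies a density-one point $\varepsilon\in S$: there is $n$ with $\lambda(S\cap C_n(\varepsilon))>\tfrac{3}{4}\lambda(C_n(\varepsilon))$, where $C_n(\varepsilon)$ is the cylinder fixing the first $n$ coordinates. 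For any $k>n$ the flip $\tau_k$ preserves both $\lambda$ and $C_n(\varepsilon)$, so the same lower bound applies to $\tau_k S\cap C_n(\varepsilon)$; inclusion–exclusion now forces $\lambda(S\cap\tau_k S\cap C_n(\varepsilon))>0$, contradicting $S\cap\tau_k S=\emptyset$. Hence $\mu(gA')=\lambda(S)=0$ as required, completing the proof.
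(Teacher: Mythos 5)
Your construction mirrors the paper's: build a continuous $\phi:2^\omega\to G$ via infinite products of elements of $B$ converging to $1$, let $\mu$ be the push-forward of the uniform measure $\lambda$ on $2^\omega$, and use the conjugacy invariance of $B$ together with $B\cap\bigcup_n A_n^{-1}A_n=\emptyset$ to conclude that $S:=\phi^{-1}(gA_n)$ is disjoint from each coordinate flip $\tau_k S$. Where you genuinely diverge is the last step: the paper cites Weil's (Steinhaus') theorem for $\Z_2^\omega$, so that $\lambda(S)>0$ would put a neighbourhood of $0$, hence some basis vector $e_k$, inside $S-S$, whence $S\cap\tau_k S\ne\emptyset$; you use the Lebesgue density theorem and inclusion--exclusion on a high-density cylinder. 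Both work; your route is more elementary and self-contained, while the paper's is the one-line citation standard in this area.

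One step in your reduction is unjustified: the assertion that left Haar null sets form a $\sigma$-ideal. This is not a standard fact, and the usual proof of $\sigma$-additivity of (two-sided) Haar null sets uses translates on both sides in an essential way, so the one-sided analogue is at least delicate. The paper avoids the issue by producing a \emph{single} measure $\lambda_*$ witnessing that $A$ as a whole is left Haar null, using the countable cover only to pass from $\lambda_*(fA)>0$ to $\lambda_*(fA_k)>0$ for some $k$. Fortunately your $\mu$ does not depend on $n$, so the fix is immediate: ordinary countable subadditivity of the one measure $\mu$ gives $\mu(gA)\le\sum_n\mu(gA_n)=0$ for all $g$, so $\mu$ witnesses that $A$ is left Haar null, and then Lemma~\ref{l:conjugacyinvariant} applies to $A$ (which, unlike the individual $A_n$, is conjugacy invariant). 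A smaller point: ``sufficiently rapidly in a compatible left-invariant metric'' is loose---left-invariant compatible metrics on a Polish group need not be complete, so Cauchy partial products need not converge in $G$; the paper's explicit inductive choice of $g_{n+1}\in B$ (moving all $2^{n+1}$ current partial products by less than $2^{-n-1}$ in a fixed complete compatible metric) is the careful version of what you want.
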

  
  \begin{remark}
    Some authors use a definition for Haar null sets which slightly differs from Definition \ref{d:haarnull}. Namely, according to that version, a set $S$ is Haar null, if there exists a
    Borel probability measure $\mu$ on $G$ and a \emph{universally measurable set} $U$ such that $S\subset U$ and for every $g,h \in G$ we have $\mu(gUh)=0$. These two notions differ in general (see \cite{gdeltahull}), although they coincide for analytic sets (see \cite{soleckion}). We would like to point out that the above theorem and Corollary \ref{c:christensen} remain true (and can be proved in the same way) if we change Borel to universally measurable everywhere and use the mentioned alternative definition of Haar null sets.     
  \end{remark}
  \begin{proof}[Proof of Theorem \ref{t:null}]
    We claim that there exists a sequence $\{g_i:i \in \omega\} \subset B$ with $g_i \to 1$ and the following properties: 
    
    \begin{itemize}
      \item for every  $(\varepsilon_i)_{i \in \omega} \in 2^\omega$ we have that the sequence $(g^{\varepsilon_0}_0g^{\varepsilon_1}_1\dots g^{\varepsilon_n}_n)_{n \in \omega}$ converges,
      \item the map $\phi:2^\omega \to G$ defined by $(\varepsilon_n)_{ n \in \omega} \mapsto g^{\varepsilon_0}_0g^{\varepsilon_1}_1 g^{\varepsilon_2}_2\dots$ is continuous (the right hand side expression makes sense because of the convergence).
    \end{itemize}
    We can choose such a sequence by induction: fix a compatible complete metric and suppose that we have already selected $g_0,g_1,\dots ,g_n$. Now notice that for every $(\varepsilon_0,\dots,\varepsilon_n) \in 2^{n+1}$ the set $\{x \in G:d(g^{\varepsilon_0}_0g^{\varepsilon_1}_1\dots g^{\varepsilon_n}_nx,g^{\varepsilon_0}_0g^{\varepsilon_1}_1\dots g^{\varepsilon_n}_n)<2^{-n-1}\}$ contains a neighbourhood of the identity. Therefore we can choose a 
    \[g_{n+1} \in B \cap \bigcap_{(\varepsilon_0,\dots,\varepsilon_n) \in 2^{n+1}} \{x \in G:d(g^{\varepsilon_0}_0g^{\varepsilon_1}_1\dots g^{\varepsilon_n}_nx,g^{\varepsilon_0}_0g^{\varepsilon_1}_1\dots g^{\varepsilon_n}_n)<2^{-n-1}\}.\]
    One can easily show that for every $(\varepsilon_n)_{ n \in \omega} \in 2^\omega$ the sequence $(g^{\varepsilon_0}_0g^{\varepsilon_1}_1\dots g^{\varepsilon_n}_n)_{n \in \omega}$ is Cauchy and the function $\phi$ is continuous.
    
    Let $\lambda$ be the usual measure on $2^\omega$ and let $\lambda_*=\phi_*\lambda$, its push forward. We claim that $\lambda_*$ witnesses that $A$ is left-Haar null which is equivalent to its Haar nullness, by the fact that $A$ is conjugacy invariant and by using Lemma \ref{l:conjugacyinvariant}. 
    
    Suppose not, then there exists an $f \in G$ so that $\lambda_*(fA)>0$, therefore $\lambda_*(fA_k)>0$ for some $k \in \omega$. This is equivalent to $\lambda(\phi^{-1}(fA_k))>0$ and if we regard $2^\omega$ as $\Z_2^\omega$, by Weil's theorem (see e.g. \cite{zrosendal}) we have that $\phi^{-1}(fA_k)-\phi^{-1}(fA_k)$ contains a neighbourhood of $(0,0,\dots)$, the identity in $\Z_2^\omega$. Then there exists an element in $\phi^{-1}(fA_k)-\phi^{-1}(fA_k)$ which is zero at every coordinate except for one. Thus, $\phi^{-1}(fA_k)$ contains two elements of the form $(\varepsilon_0,\dots,\varepsilon_{n-1},0,\varepsilon_{n+1},\dots)$ and  $(\varepsilon_0,\dots,\varepsilon_{n-1},1,\varepsilon_{n+1},\dots)$, i. e., differing at exactly one place. Then taking the $\phi$ images of these elements we obtain that there exist $h_1,h_2 \in G$ so that $h_1 h_2 \in fA_k$ and $h_1 g_n h_2 \in fA_k$. This implies \[h^{-1}_2 h^{-1}_1 h_1 g_n h_2 \in A^{-1}_kA_k\]
    thus 
    \[h^{-1}_2 g_n  h_2 \in A^{-1}_kA_k\]
    but by the conjugacy invariance of $B$ we get
    \[h^{-1}_2 g_n  h_2 \in B \cap A^{-1}_kA_k,\]
    contradicting the initial assumptions of the theorem.
  \end{proof}
  
  Letting $A=A_n$  for every $n \in \omega$ and using that if $A$ is conjugacy invariant then so is $G \setminus A^{-1}A$ we can deduce the following corollary:
  \begin{corollary}
    \label{c:christensen}
    If $A$ is a conjugacy invariant Borel set that is not Haar null then $A^{-1}A$ contains a neighbourhood of the identity. 
  \end{corollary}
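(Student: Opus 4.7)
The plan is to derive this directly from Theorem \ref{t:null} by contradiction, exactly as the parenthetical hint suggests. Suppose $A$ is a conjugacy invariant Borel set that is not Haar null, and suppose for contradiction that $A^{-1}A$ does \emph{not} contain a neighbourhood of the identity. I will produce a cover and a set $B$ satisfying the hypotheses of Theorem \ref{t:null} and thereby conclude that $A$ is Haar null, a contradiction.

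The cover I would take is the trivial one: set $A_n = A$ for every $n \in \omega$, so that each $A_n$ is Borel and $\bigcup_n A_n^{-1}A_n = A^{-1}A$. For the witness set, let $B = G \setminus A^{-1}A$. Two properties need to be checked. First, $B$ is conjugacy invariant: since $A$ is conjugacy invariant, so is $A^{-1}$, and for any $g \in G$ we have $g(A^{-1}A)g^{-1} = (gA^{-1}g^{-1})(gAg^{-1}) = A^{-1}A$, so $A^{-1}A$ is conjugacy invariant, and hence so is its complement $B$. Second, $1 \in \overline{B}$: by our assumption, $A^{-1}A$ contains no open neighbourhood of the identity, which means precisely that every neighbourhood of $1$ meets the complement $B$. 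The disjointness condition $B \cap \bigcup_n A_n^{-1}A_n = B \cap A^{-1}A = \emptyset$ is immediate from the definition of $B$.

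All hypotheses of Theorem \ref{t:null} are therefore satisfied, so $A$ must be Haar null, contradicting our assumption. Hence $A^{-1}A$ contains a neighbourhood of the identity.

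I do not anticipate any serious obstacle here: the corollary is essentially a rephrasing of the theorem in the case of a single-set ``cover'', and the only things to verify are the algebraic fact that $A^{-1}A$ is conjugacy invariant whenever $A$ is (a one-line computation) and the topological reformulation of ``contains no neighbourhood of $1$'' as ``$1$ is in the closure of the complement''. Note in particular that we do not need $B$ (or $A^{-1}A$) to be Borel, since Theorem \ref{t:null} only requires the cover pieces $A_n$ to be Borel and places no descriptive-set-theoretic requirement on $B$.
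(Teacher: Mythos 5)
Your proof is correct and is exactly the argument the paper intends: the paper itself notes that the corollary follows from Theorem \ref{t:null} by taking $A_n = A$ for every $n$ and $B = G \setminus A^{-1}A$, which is precisely what you do. The verification that $A^{-1}A$ is conjugacy invariant and that $1 \in \overline{B}$ under the contradiction hypothesis is also the same, so there is nothing to add.
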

  
\section{Characterisation of the non-Haar null conjugacy classes of \texorpdfstring{$\aut(\Q, <)$}{Aut(Q, <)}} 
  \label{s:autQ}
  
  We now formulate and prove our main theorem. 
  In the investigation of the structure of $\aut(\Q, <)$ we use the concept of orbitals (defined below, for more details on this topic see \cite{Glass}). Let $p, q \in \Q$. The interval $(p, q)$ will denote the set $\{r \in \Q : p < r < q\}$. For an automorphism $f \in \aut(\Q, <)$, we denote the set of fixed points of $f$ by $\fix(f)$. 
  \begin{definition}
    \label{d:orbital}
    The set of \emph{orbitals} of an automorphism $f \in \aut(\Q, <)$, $\mathcal{O}^*_f$, consists of 
    the convex hulls (relative to $\Q$) of the orbits of the rational numbers, that is
    $$
    \mathcal{O}^*_f = \{ \conv(\{f^n(r) : n \in \Z\}) : r \in \Q\}.
    $$
  \end{definition}
  It is easy to see that the orbitals of $f$ form a partition of $\Q$, with the fixed points determining one element orbitals, hence ``being in the same orbital'' is an equivalence relation. Using this fact, we define the relation $<$ on the set of orbitals by letting $O_1 < O_2$ for distinct $O_1, O_2 \in \mathcal{O}^*_f$ if $p_1 < p_2$ for some (and hence for all) $p_1 \in O_1$ and $p_2 \in O_2$. Note that $<$ is a linear order on the set of orbitals. 
  
  It is also easy to see that if $p, q \in \Q$ are in the same orbital of $f$ then $f(p) > p \Leftrightarrow f(q) > q$, $f(p) < p \Leftrightarrow f(q) < q$ and $f(p) = p \Leftrightarrow f(q) = q \Rightarrow p = q$. This observation makes it possible to define the \emph{parity function}, $s_f : \mathcal{O}^*_f \to \{-1, 0, 1\}$. Let $s_f(O) = 0$ if $O$ consists of a fixed point of $f$, $s_f(O) = 1$ if $f(p) > p$ for some (and hence, for all) $p \in O$ and $s_f(O) = -1$ if $f(p) < p$ for some (and hence, for all) $p \in O$. 

  The following is our main result. 
  
  \begin{theorem}
    \label{t:qintro}
    For almost every element $f$ of $\aut(\mathbb{Q}, <)$ 
    \begin{enumerate}
      \item \label{pt:autq1} for orbitals $O_1, O_2 \in \mathcal{O}^*_f$ with $O_1 < O_2$ such that $s_f(O_1) = s_f(O_2) = 1$ or $s_f(O_1) = s_f(O_2) 
      = -1$, there exists an orbital $O_3 \in \mathcal{O}^*_f$ with $O_1 < O_3 < O_2$ 
      and $s_f(O_3) \neq s_f(O_1)$,
      \item \label{pt:autq2} $f$ has only finitely many fixed points. 
    \end{enumerate}
    These properties characterise the non-Haar null conjugacy classes, that is, a conjugacy class is non-Haar null if and only if one (or equivalently each) of its elements has properties \eqref{pt:autq1} and \eqref{pt:autq2}.
    
    Moreover, every non-Haar null conjugacy class is compact biter and those non-Haar null classes in which the elements have no rational fixed points are compact catchers. 
  \end{theorem}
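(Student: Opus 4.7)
The plan is to split the argument into three pieces: (I) showing that (1) and (2) hold almost surely; (II) showing that any conjugacy class of an element satisfying (1) and (2) is non-Haar null by a compact biter/catcher construction; (III) combining the two.

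Property (2) is immediate from Corollary \ref{c:fininf}, since a fixed point is an orbit of size one and $\aut(\Q,<)$ has the FACP: the stabiliser $\aut(\Q,<)_{(F)}$ of a finite $F\subset\Q$ acts transitively on each interval component of $\Q\setminus F$, so only points of $F$ have finite orbit. For property (1), let $V$ be the Borel conjugacy-invariant set of violators, and apply the contrapositive of Corollary \ref{c:christensen}: for every finite $F\subset\Q$ I must exhibit some $g\in\aut(\Q,<)_{(F)}$ that is not of the form $v_1^{-1}v_2$ with $v_1,v_2\in V$. A natural candidate is a $g$ whose restriction to some interval component of $\Q\setminus F$ has a rigid alternating-sign orbital pattern; a parity-chasing argument on orbitals should then force any factorisation $g=v_1^{-1}v_2$ to make at least one factor locally satisfy property~(1) on that interval, contradicting membership in $V$.

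For (II), fix $f_0$ satisfying (1) and (2), let $C=[f_0]$, and let $K\subseteq\aut(\Q,<)$ be nonempty compact. By the Fact at the end of Section~\ref{s:prel}, $\{k(x):k\in K\}$ and $\{k^{-1}(x):k\in K\}$ are finite for every $x\in\Q$. Enumerating $\Q=\{q_n\}_{n\in\omega}$, I would carry out a simultaneous back-and-forth construction of $g,h\in\aut(\Q,<)$, together with (in the biter case) a basic open $U$ imposing finitely many conditions on $k$, so that for every $k\in K\cap U$ the composition $gkh$ has the orbital structure of $f_0$, hence is conjugate to $f_0$. At each step only finitely many constraints are active by compactness of $K$; property~(1) of $f_0$ guarantees that opposite-sign orbitals are densely available, so whenever the current commitments threaten a forbidden same-sign adjacency in $gkh$ we can interpose an opposite-sign orbital; property~(2) lets us match $f_0$'s finitely many fixed points at the outset. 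When $f_0$ has no rational fixed points, no shrinking of $K$ is ever required, yielding the compact catcher statement.

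Combining these, the set $A$ of $f$ satisfying (1) and (2) is co-Haar null and conjugacy invariant by (I), so every non-Haar null conjugacy class is contained in $A$, while (II) together with Fact \ref{f:biter} shows that every conjugacy class in $A$ is non-Haar null, completing the characterisation. I expect the back-and-forth in (II) to be the main obstacle, as it must couple the three tasks of building $g$, building $h$, and matching the orbital type of $gkh$ to that of $f_0$ uniformly across $k\in K\cap U$, while invoking property~(1) to pre-empt same-sign adjacencies as they arise; a secondary difficulty is the Christensen-style argument in (I), which requires pinning down exactly which rigid local pattern on $g$ obstructs factorisations through $V\times V$.
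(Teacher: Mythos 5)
Your proposal has a genuine gap in Part (I), specifically in the argument that property (1) holds almost surely. You want to apply the contrapositive of Corollary~\ref{c:christensen} to the Borel, conjugacy-invariant set $V$ of violators of (1): for every finite $F\subset\Q$, exhibit $g\in\aut(\Q,<)_{(F)}$ with $g\notin V^{-1}V$. This cannot work, because $V^{-1}V$ in fact \emph{does} contain every such neighbourhood $\aut(\Q,<)_{(F)}$. Concretely, take any $g\in\aut(\Q,<)_{(F)}$ and pick some $p\in F$ (the case $F=\emptyset$ is trivial). Define $v_1$ to agree with an arbitrary fixed violator on $(p,\infty)$, and on $(-\infty,p)$ set $v_1 = w\cdot (g|_{(-\infty,p)})^{-1}$ where $w$ is an arbitrary violator of $(-\infty,p)\cap\Q$. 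Then $v_1\in V$ (it is bad on the right), and $v_2:=v_1 g\in V$ because $v_2|_{(-\infty,p)}=w$ is bad on the left, yet $v_1^{-1}v_2=g$. The flaw in your ``parity-chasing'' idea is exactly this: membership in $V$ is a global property, so even if your rigid $g$ forces one factor to satisfy (1) \emph{locally} on a component of $\Q\setminus F$, that factor is free to violate (1) somewhere far away. So Corollary~\ref{c:christensen} gives no information here.

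The paper escapes this by using the strictly stronger Theorem~\ref{t:null} rather than its corollary: after splitting $V$ into the ``positive'' part $\mathcal{A}$ and the ``negative'' part $\mathcal{A}'$, one further decomposes $\mathcal{A}=\bigcup_n\mathcal{A}_n$ where $\mathcal{A}_n$ fixes the \emph{location} $[p_n,q_n]$ of the offending same-sign block, and then one produces a single conjugacy-invariant set $\mathcal{B}$ (automorphisms with $f(r)\ge r$, finitely many rational fixed points, and no irrational fixed points) accumulating at the identity and disjoint from $\bigcup_n\mathcal{A}_n^{-1}\mathcal{A}_n$. Pinning down the interval is what makes the local orbital argument (comparing how $I=\{r:r\le r' \text{ for some } r' \text{ in the $g$-orbit of } p_n\}$ interacts with orbitals of $f$, $g$, and $f^{-1}g$) go through; without that localisation there is nothing to chase. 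If you want to salvage your plan, this is the extra idea you must add. Your treatment of property (2) via Corollary~\ref{c:fininf} is fine, and your Part (II) back-and-forth outline is in the right spirit (the paper actually uses only a one-sided translate $g^{-1}\mathcal{K}\subset\mathcal{C}$ for the fixed-point-free case, via Proposition~\ref{p:cpct translated into conj class of fixed-point free}, a bookkeeping function $\phi:\Q\times\mathcal{K}\to\mathcal{O}^*_f$, and the conjugacy criterion of Lemma~\ref{l:conjugacy in aut(Q)}, then shrinks to $\mathcal{K}=\mathcal{U}\cap\mathcal{F}$ matching fixed-point data to get the biter statement), but as written it is only a sketch and, as you note, carries the bulk of the technical weight.
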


  For a subset $F \subset \R \setminus \Q$ that is closed in $\R$ one can construct an automorphism $f \in \aut(\Q, <)$ such that the set of fixed points of the unique extension of $f$ to a homeomorphism of $\R$ is $F$, and $f$ satisfies conditions \eqref{pt:autq1} and \eqref{pt:autq2}. It is not hard to see that one can find continuum many pairwise non-order isomorphic such closed sets, producing non-conjugate automorphisms. Hence we obtain the following surprising corollary:
  
  \begin{corollary}
    \label{c:autqcont}
    There are continuum many non-Haar null conjugacy classes in $\aut(\mathbb{Q},<)$, and their union is co-Haar null.
  \end{corollary}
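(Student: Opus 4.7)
The plan is to derive both assertions from Theorem \ref{t:qintro}. For the co-Haar null claim, observe that properties \eqref{pt:autq1} and \eqref{pt:autq2} are conjugacy-invariant and, by Theorem \ref{t:qintro}, characterise the elements of the non-Haar null conjugacy classes; hence the union of all non-Haar null classes equals the set $\{f \in \aut(\Q, <) : f \text{ satisfies \eqref{pt:autq1} and \eqref{pt:autq2}}\}$, which is co-Haar null by the same theorem. So only the existence of continuum many distinct non-Haar null classes remains to be shown.

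Following the outline in the paragraph preceding the corollary, I would construct, for each closed $F \subset \R \setminus \Q$, an automorphism $f_F \in \aut(\Q, <)$ satisfying \eqref{pt:autq1} and \eqref{pt:autq2} whose unique order-preserving extension $\bar f_F$ to $\R$ has $\fix(\bar f_F) = F$; condition \eqref{pt:autq2} is then automatic since $F \cap \Q = \emptyset$. The orbitals of $f_F$ correspond bijectively to the connected components of $\R \setminus F$, and $f_F$ is defined to shift the rationals of each such component $C$ monotonically in a direction (sign) chosen to force \eqref{pt:autq1}. Non-conjugacy is read off from the fixed-point data: if $f_F$ and $f_{F'}$ were conjugate via some $h \in \aut(\Q, <)$, then $h$ would extend uniquely to an order-preserving homeomorphism of $\R$ which conjugates $\bar f_F$ to $\bar f_{F'}$ and hence restricts to an order-isomorphism $F \to F'$. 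Therefore pairwise non-order-isomorphic $F$ produce pairwise non-conjugate $f_F$.

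A concrete continuum-sized family $\{F_\sigma : \sigma \in 2^\omega\}$ of pairwise non-order-isomorphic closed subsets of $\R \setminus \Q$ can be obtained as follows. For each $\sigma \in 2^\omega$ and each $n \in \omega$, place inside $(n, n+1)$ a ``block'' of irrationals whose order-type records $\sigma(n)$ (for instance, a Cantor set of irrationals when $\sigma(n) = 0$, versus a Cantor set with one additional isolated irrational inserted into a gap when $\sigma(n) = 1$), and between consecutive blocks place an order-definable ``separator'' consisting of two consecutive isolated irrationals. Since the separator pattern (a pair of consecutive isolated points of the set) is order-definable and does not occur inside any block, the block decomposition of $F_\sigma$ is recoverable from its abstract order type, and the order-type of the $n$-th block determines $\sigma(n)$; thus $\sigma \ne \tau$ implies $F_\sigma \not\cong F_\tau$ as ordered sets, yielding $2^{\aleph_0}$ distinct conjugacy classes $[f_{F_\sigma}]$.

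The main technical obstacle I foresee is the sign-assignment needed to enforce \eqref{pt:autq1}: two components of $\R \setminus F$ that are consecutive in the component order (no component strictly between them) are forced to receive opposite signs, and more generally between any two same-sign components one must actually display an opposite-sign one. Since the ``consecutive pair'' relation on the countable set of components is a graph that decomposes as a disjoint union of paths, a proper $2$-colouring along these chains exists; on densely-ordered stretches of components (as occur near the perfect part of $F$) one $2$-colours by partitioning into two order-dense subsets. Stitching these local choices together produces a sign pattern satisfying \eqref{pt:autq1}, and the corresponding $f_F$ has the required properties, finishing the argument.
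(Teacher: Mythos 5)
Your proof follows the same route as the paper's outline (given in the paragraph immediately preceding Corollary~\ref{c:autqcont}): deduce co-Haar nullity directly from Theorem~\ref{t:qintro}, build a good $f_F$ from each closed $F\subset\R\setminus\Q$, use the unique order-preserving extension to $\R$ to read off that conjugate automorphisms must have order-isomorphic fixed-point sets, and produce continuum many pairwise non-order-isomorphic such $F$. The paper leaves the concrete family and the sign-assignment verification implicit; your fillings-in of these details (blocks with order-definable separators, alternating signs on consecutive components together with a dense bi-partition on the dense stretches of gaps) are correct.
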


  Note that it was proved by Solecki \cite{openlyhaarnull} that in every non-locally compact Polish group that admits a two-sided invariant metric there are continuum many pairwise disjoint non-Haar null Borel sets, thus the above corollary is an extension of his results for $\aut(\mathbb{Q},<)$. We would like to point out that in a sharp contrast to this result, in $\hp([0,1])$ (that is, in the group of order preserving homeomorphisms of the interval) the random behaviour is quite different (see \cite{homeo}), more similar to the case of $S_\infty$: there are only countably many non-Haar null conjugacy classes and their union is co-Haar null.

  Instead of proving Theorem \ref{t:qintro}, we prove the following two theorems separately. It is straightforward to check that these theorems indeed imply Theorem \ref{t:qintro}.
  
  \begin{theorem}
    \label{t:autQ}
    The conjugacy class of $f \in \aut(\Q, <)$ is non-Haar null if and only if 
    $\fix(f)$ is finite, and for each pair of orbitals $O_1, O_2 \in \mathcal{O}^*_f$ 
    with $O_1 < O_2$ such that $s_f(O_1) = s_f(O_2) = 1$ or $s_f(O_1) = s_f(O_2) 
    = -1$, there exists an orbital $O_3 \in \mathcal{O}^*_f$ with $O_1 < O_3 < O_2$ 
    and $s_f(O_3) \neq s_f(O_1)$. 
    
    Moreover, every non-Haar null conjugacy class is compact biter and those non-Haar null classes in which the elements have no fixed points are compact catchers.
  \end{theorem}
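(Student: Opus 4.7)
I would handle the two directions of the characterization separately; the compact biter/catcher strengthening falls out of the sufficiency argument.

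\emph{Necessity.} Assume $C=C(f)$ is not Haar null. If $\fix(f)$ were infinite, every element of $C$ would have infinitely many fixed points---hence infinitely many finite orbits---contradicting Corollary \ref{c:fininf}; so (a) holds. Suppose (b) fails, say there are orbitals $O_1<O_2$ of $f$ with $s_f(O_1)=s_f(O_2)=+1$ and no $-1$-orbital strictly between (the $-1$ case is symmetric). Choose $p_1\in O_1$, $p_2\in O_2$; then $f(q)\ge q$ on $[p_1,p_2]\cap\Q$, with $f(p_1)>p_1$ and $f(p_2)>p_2$, and every conjugate $g=\phi f\phi^{-1}$ inherits such a witnessing pair. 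This gives the Borel cover
\[
C = \bigcup_{r<r'\in\Q}A_{r,r'},\qquad A_{r,r'}=\{g\in C : g(r)>r,\ g(r')>r',\ g(q)\ge q \text{ on }[r,r']\cap\Q\}
\]
(together with the symmetric $-1$ family). I would then apply Christensen's Theorem \ref{t:null}, choosing a conjugacy-invariant set $B$---informally, automorphisms whose orbital structure exhibits $-1$-fluctuation in every sub-interval, incompatible with the $A_{r,r'}$-inequalities---with $1\in\overline B$ arranged by perturbations of $\id$ that agree with the identity on arbitrarily long initial segments of an enumeration of $\Q$, and $B\cap A_{r,r'}^{-1}A_{r,r'}=\emptyset$ deduced by tracking how the ``$g\ge\id$ on $[r,r']$'' inequalities propagate to $g_1^{-1}g_2$ on a suitably chosen sub-interval. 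This forces $C$ to be Haar null, a contradiction.

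\emph{Sufficiency, biter and catcher.} Assume $f$ satisfies (a) and (b); I want to show $C$ is a compact biter, and a compact catcher when $\fix(f)=\emptyset$. Given a non-empty compact $K\subseteq\aut(\Q,<)$, the fact quoted at the end of Section~2 gives that $\{k(q):k\in K\}$ and $\{k^{-1}(q):k\in K\}$ are finite for every $q\in\Q$. Enumerate $\Q=\{r_i\}_{i\in\omega}$ and perform a back-and-forth construction of $g,h\in\aut(\Q,<)$ together with (in the biter case) a basic neighborhood $U$ of some fixed $k_0\in K$. At each stage, the partial information on $g,h$ determines finite partial orbital data for $gkh$, uniformly in $k$ lying in the current open piece of $K$; I extend so that these partial pictures remain extendible to the full orbital type of $f$. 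Condition (b) provides the flexibility: the dense alternation of $+1$- and $-1$-orbitals in $f$ guarantees that any new value produced by a finite fiber of $K$ can be absorbed into an orbital of the required sign. Condition (a) lets us match the finitely many fixed points of $f$ by prescribing the images of finitely many rationals under $g$ and $h$; when $\fix(f)=\emptyset$, this obstruction disappears and the construction goes through for every $k\in K$ uniformly, yielding the catcher property.

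\emph{Main obstacle.} The hardest step is the back-and-forth in the sufficiency direction: one must preserve, at every stage, the \emph{universal} guarantee that $gkh\in C$ for all $k$ in a potentially uncountable target piece of $K$. Compactness allows one to reduce this universal statement to countably many finite constraints (the finite fibers $K\cdot r_i$), but simultaneously arranging the sign-alternation required by (b) together with the finite fixed-point accounting of (a) without spoiling later stages requires careful bookkeeping. A secondary difficulty, in the necessity direction, is pinpointing a conjugacy-invariant $B$ that is dense at $1$ yet provably avoids every product $g_1^{-1}g_2$ with $g_1,g_2\in A_{r,r'}$; this is where the structural inequality defining $A_{r,r'}$ must be exploited precisely.
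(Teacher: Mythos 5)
Your high-level plan matches the paper's: invoke Corollary \ref{c:fininf} for finiteness of $\fix(f)$, use Christensen's Theorem \ref{t:null} to rule out "bad" orbital configurations, and build a translate of an arbitrary compact $\mathcal{K}$ into the conjugacy class for sufficiency. However, in both directions the crucial technical content is missing and, in one place, your sketch points in the wrong direction.

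\emph{Necessity.} Your cover $A_{r,r'}$ is essentially the paper's $\mathcal{A}_n$, but your candidate for $B$ — "automorphisms whose orbital structure exhibits $-1$-fluctuation in every sub-interval" — is the opposite of what the paper uses and is not obviously disjoint from $A_{r,r'}^{-1}A_{r,r'}$. The paper's $\mathcal{B}$ is the set of automorphisms $f$ with $\fix(f)$ finite, $|\mathcal{O}^*_f| = 2|\fix(f)|+1$ (no "irrational fixed points"), and $f(r)\ge r$ for \emph{all} $r$, i.e.\ no $-1$-behaviour at all. The disjointness $\mathcal{B}\cap\mathcal{A}_n^{-1}\mathcal{A}_n=\emptyset$ is then proved by a concrete cut argument: for $f,g\in\mathcal{A}_n$, letting $I$ be the downward closure of the $g$-orbit of $p_n$, one distinguishes whether $I$ splits an $f$-orbital, and either produces an irrational fixed point of $f^{-1}g$ or a point where $f^{-1}g$ dips below the identity. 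This precise dichotomy is the entire content of Lemma \ref{l:prevalent aut(Q)}; your $B$ is not pinned down and you give no argument for the disjointness, which is the part that actually needs proof.

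\emph{Sufficiency.} Your "back-and-forth for $g,h$" glosses over the main difficulty: the construction must produce a \emph{single} $g$ such that $g^{-1}h$ is conjugate to $f$ \emph{for every} $h\in\mathcal{K}$ simultaneously. The paper (Proposition \ref{p:cpct translated into conj class of fixed-point free}) does this by recursively building not only $g$ but also a bookkeeping function $\phi:\Q\times\mathcal{K}\to\mathcal{O}^*_f$ that tracks, for each $h$, a monotone surjection onto the orbitals of $f$, subject to eleven interlocking invariants; conjugacy is then certified via the separate Lemma \ref{l:conjugacy in aut(Q)} (a criterion reducing conjugacy of good automorphisms to the existence of such a monotone surjection respecting $s_f$). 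Your proposal neither isolates this conjugacy criterion nor addresses how the single sequence of choices for $g$ can serve all $h\in\mathcal{K}$ at once — the compactness of $\mathcal{K}$ enters at specific moments (choosing $t$, $q$, $r$ outside the finitely many images $h(p)$ or preimages $h^{-1}(q)$, partitioning $\mathcal{K}$ into compact pieces $\mathcal{K}^j$), and the correctness of the construction is genuinely delicate. Also note the paper only uses a one-sided translate $g^{-1}\mathcal{K}$; a two-sided $gKh$ is not needed. Your reduction of the biter case to the fixed-point-free catcher case by prescribing images of the finitely many rational fixed points is essentially the paper's argument and is fine. Overall: the skeleton is right, but the two load-bearing lemmas are each replaced by an informal sketch that does not close, and in the necessity direction the proposed $B$ would need to be replaced by the "everywhere $\ge\id$, no irrational fixed points" set and supplemented by the cut argument.
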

  
  \begin{theorem}
    \label{t:autQ union of null}
    The union of the Haar null conjugacy classes of $\aut(\Q, <)$ is Haar null.
  \end{theorem}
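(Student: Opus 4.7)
The plan is to split the union of Haar null conjugacy classes according to Theorem \ref{t:autQ} into the two conjugacy invariant parts $A = \{f : \fix(f) \text{ is infinite}\}$ and $B = \{f : f\text{ has a same-sign-gap}\}$. For $A$, since each fixed point is a finite orbit, $A$ is contained in the set of automorphisms with infinitely many finite orbits, which is Haar null by Corollary \ref{c:fininf}.

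For $B$, I would write it as a countable union of Borel sets $B = \bigcup_{p<q\in\Q,\ \sigma\in\{+,-\}} B_{p,q,\sigma}$, where $B_{p,q,+}$ consists of those $f$ satisfying $f(r) > r$ for every $r \in [p,q] \cap \Q$ and having $p$ and $q$ in distinct orbitals; every element of $B_+$ lies in some $B_{p,q,+}$ by picking rationals in the two gap orbitals, and conversely one recovers the gap from such a pair. A routine computation using orbital structure shows that $f \in \psi B_{p,q,+}$ iff $f(r) > \psi(r)$ for every $r \in [p,q] \cap \Q$ and the continuous extension $\tilde f$ agrees with $\tilde \psi$ at some irrational point of $(p,q)$. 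Because $B$ is conjugacy invariant, by Lemma \ref{l:conjugacyinvariant} it suffices to produce a single Borel probability measure $\mu$ on $\aut(\Q,<)$ with $\mu(\psi B_{p,q,\sigma}) = 0$ for every $\psi \in \aut(\Q,<)$, every $p<q \in \Q$, and every $\sigma$; countable additivity then gives $\mu(\psi B) = 0$ for all $\psi$.

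The key construction is a Borel family $\{\phi_t\}_{t \in [0,1]}$ in $\aut(\Q,<)$ such that for every real $s$, the map $t \mapsto \tilde \phi_t(s)$ is strictly increasing, and jointly continuous in $(t,s)$ at each point $(t,\alpha)$ with $\alpha$ irrational. I would build this family by a back-and-forth construction along an enumeration of $\Q$, prescribing $\phi_t(q_n)$ to approximate a shift depending on $t$ while keeping $\phi_t$ an order-isomorphism of $\Q$. Let $\mu$ be the push-forward of Lebesgue measure on $[0,1]$. For fixed $\psi, p, q$, define $t_s := \inf\{t : \tilde \phi_t(s) > \tilde \psi(s)\}$. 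The conditions defining $\phi_t \in \psi B_{p,q,+}$ then translate into $t \geq t_r$ for every $r \in [p,q] \cap \Q$ together with $t = t_\alpha$ for some irrational $\alpha \in (p,q)$. Joint continuity at irrationals, combined with density of the rationals, gives $\sup_{r \in [p,q] \cap \Q} t_r = \sup_{s \in [p,q]} t_s =: T$, and $t_\alpha \leq T$ for every $\alpha \in [p,q]$, so the two conditions together force $t = T$; hence $\{t : \phi_t \in \psi B_{p,q,+}\}$ has Lebesgue measure zero, and the case $\sigma = -$ is symmetric.

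The main obstacle is this explicit construction of the family $\{\phi_t\}$: since $\phi_t(r)$ must lie in $\Q$ for each rational $r$, the function $t \mapsto \phi_t(r)$ is necessarily a non-decreasing step function in $t$. One must arrange these countably many step functions to be dense enough that their limits at each irrational $\alpha$ yield a strictly increasing and continuous map $t \mapsto \tilde \phi_t(\alpha)$, with enough joint continuity at irrationals to justify the sup-equality used above, all while keeping $\phi_t$ an order-isomorphism of $\Q$ for each $t$ and ensuring $t \mapsto \phi_t$ is Borel measurable.
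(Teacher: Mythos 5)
Your handling of $A$ (infinitely many fixed points) via Corollary~\ref{c:fininf} matches the paper, and your decomposition $B=\bigcup_{p<q,\,\sigma}B_{p,q,\sigma}$ of the same-sign-gap set is the same as the paper's $\mathcal{A}=\bigcup_n\mathcal{A}_n$ inside Lemma~\ref{l:prevalent aut(Q)}; your ``irrational fixed point'' characterization of $\psi B_{p,q,+}$ is also correct. But the route from there to Haar nullness is genuinely different. The paper invokes the Christensen-type Theorem~\ref{t:null}: it exhibits a conjugacy-invariant set $\mathcal{B}$ (automorphisms $f\geq\id$ with finitely many fixed points and $|\mathcal{O}^*_f|=2|\fix(f)|+1$) clustering at $\id$ and disjoint from $\bigcup_n\mathcal{A}_n^{-1}\mathcal{A}_n$, and the witnessing measure is extracted abstractly as the pushforward of the coin-flip measure along a tree of convergent products inside $\mathcal{B}$. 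You instead build a measure explicitly as the pushforward of Lebesgue measure along a monotone one-parameter family $\{\phi_t\}$, and argue that $\{t:\phi_t\in\psi B_{p,q,\sigma}\}$ is a singleton. Granting the family, your argument is sound, and the joint-continuity hypothesis is in fact superfluous: for $t'\in(T,t_\alpha)$ one has $\tilde\phi_{t'}(r)>\tilde\psi(r)$ on $[p,q]\cap\Q$, so letting rationals tend to $\alpha$ and using continuity of the single homeomorphism $\tilde\phi_{t'}$ gives $\tilde\phi_{t'}(\alpha)\geq\tilde\psi(\alpha)$, while $t'<t_\alpha$ together with strict monotonicity of $t\mapsto\tilde\phi_t(\alpha)$ gives $\tilde\phi_{t'}(\alpha)<\tilde\psi(\alpha)$; the contradiction yields $t_\alpha\leq T$ and hence $t=T$.

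The remaining gap is exactly the one you flag: the existence of the family $\{\phi_t\}$. It is plausible --- a back-and-forth which, at each dyadic split of the parameter interval, inserts a monotone ``staircase barrier'' with rational breakpoints into each rectangle left undetermined so far, so that every irrational $x$ sits under a rational barrier height strictly separating the two branches, ought to produce it --- but the constraints are delicate: $t\mapsto\phi_t(r)$ is forced to be a step function for each rational $r$, and one must coordinate these countably many step functions so that separation holds at \emph{every} irrational for \emph{every} pair $t<t'$, while keeping each $\phi_t$ a surjection onto $\Q$ and $t\mapsto\phi_t$ Borel. Carrying this out is not obviously lighter work than the paper's proof of Lemma~\ref{l:prevalent aut(Q)} itself. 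The Christensen route sidesteps the monotonicity requirement entirely, needing only a sequence in $\mathcal{B}$ converging to $\id$ with convergent partial products, which is far easier to arrange; what your approach buys is a concrete geometric description of the witnessing measure, in the spirit of the analogous arguments for $\hp([0,1])$.
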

  
  We say that an automorphism is \emph{good} if it satisfies the conditions of 
  Theorem \ref{t:qintro}. In the proof of Theorems \ref{t:autQ} we use the following lemma to check 
  conjugacy between good automorphisms.
  
  \begin{lemma}
    \label{l:conjugacy in aut(Q)}
    Let $f$ and $g$ be good automorphisms without fixed points. Suppose that there exists a function $\phi: \Q \to \mathcal{O}^*_f$ with the following properties: it is monotonically increasing (not necessarily strictly), surjective, and for each $q \in \Q$, 
    \begin{enumerate}[label=(\arabic*)]
      \item\label{il:g(q) > q <=> s(phi(q)) = 1} $g(q) > q \Leftrightarrow   
      s_f(\phi(q)) = 1$;
      \item\label{il:g(q) < q <=> s(phi(q)) = -1} $g(q) < q \Leftrightarrow 
      s_f(\phi(q)) = -1$.
    \end{enumerate}
    Then $f$ and $g$ are conjugate automorphisms. 
  \end{lemma}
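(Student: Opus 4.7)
The plan is to build an order-preserving bijection $h:\mathbb{Q}\to\mathbb{Q}$ satisfying $h\circ g=f\circ h$. The first task is to show that $\phi$ factors through a well-defined, order-preserving, parity-preserving bijection $\hat{\phi}:\mathcal{O}^*_g\to\mathcal{O}^*_f$. Fix a $g$-orbital $O$; since $g$ has no fixed points, $s_g(O)\in\{-1,1\}$, and conditions \ref{il:g(q) > q <=> s(phi(q)) = 1}--\ref{il:g(q) < q <=> s(phi(q)) = -1} force $s_f(\phi(q))=s_g(O)$ for every $q\in O$. Now $O$ is convex in $\mathbb{Q}$ and $\phi$ is monotone, so $\phi(O)$ is an interval in the linearly ordered set $\mathcal{O}^*_f$, all of whose members share the parity $s_g(O)$. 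If $\phi(O)$ contained two distinct $f$-orbitals $O_1<O_2$, the goodness of $f$ would produce an intermediate $f$-orbital $O_3$ of opposite parity which still lies in $\phi(O)$---a contradiction. Hence $\phi$ is constant on each $g$-orbital and descends to $\hat{\phi}$. Monotonicity and surjectivity of $\hat{\phi}$ are inherited from $\phi$; for injectivity, if distinct $g$-orbitals $O_1<O_2$ had $\hat{\phi}(O_1)=\hat{\phi}(O_2)$, parity preservation forces $s_g(O_1)=s_g(O_2)$, and the goodness of $g$ then yields an intermediate $g$-orbital of opposite parity whose image is squeezed (by monotonicity) to equal $\hat{\phi}(O_1)$, contradicting parity preservation.

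The second task is an orbit-by-orbit construction of the conjugating map. For each $g$-orbital $O$ with image $O':=\hat{\phi}(O)$, both $O$ and $O'$ are convex subsets of $\mathbb{Q}$ without rational endpoints (any such endpoint would be a rational fixed point of $g$ or $f$), hence each is isomorphic to $\mathbb{Q}$ as a linear order, and the restrictions $g|_O$ and $f|_{O'}$ are fixed-point-free with common parity. I apply the standard fundamental-domain construction: pick $q_0\in O$ and $p_0\in O'$, let $F:=\{q\in O:q_0\leq q<g(q_0)\}$ and $F':=\{p\in O':p_0\leq p<f(p_0)\}$ (with the inequalities reversed if the common parity is $-1$); observe that each of $F,F'$ is a countable dense linear order with a minimum and no maximum, hence they are order isomorphic by Cantor's theorem, and that the translates $g^n(F)$ and $f^n(F')$ partition $O$ and $O'$ respectively. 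Choosing any order isomorphism $h_F:F\to F'$ and setting $h_O(g^n(q)):=f^n(h_F(q))$ yields an order isomorphism $O\to O'$ intertwining $g|_O$ with $f|_{O'}$. Gluing $h:=\bigsqcup_O h_O$ gives an order-preserving bijection $\mathbb{Q}\to\mathbb{Q}$ (order within orbitals via $h_O$, between orbitals via the order preservation of $\hat{\phi}$) with $h\circ g=f\circ h$, so $hgh^{-1}=f$ as required. The main subtle point is the very first reduction---using the goodness of $f$ to collapse $\phi$ to a single value on each $g$-orbital; everything after that is either a routine verification or a standard dense-linear-order argument.
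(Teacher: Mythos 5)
Your proof is correct, and the first half --- showing that goodness of $f$ forces $\phi$ to be constant on each $g$-orbital, and that the induced map $\hat\phi:\mathcal{O}^*_g\to\mathcal{O}^*_f$ is an order-preserving, parity-preserving bijection (with injectivity coming from the goodness of $g$) --- is essentially identical, step for step, to the argument in the paper. Where you diverge is in the second half: the paper at this point simply invokes a known characterization of conjugacy in $\aut(\Q,<)$ (Glass, \cite[Theorem~2.2.5]{Glass}), which says exactly that the existence of such a $\psi=\hat\phi$ implies $f$ and $g$ are conjugate, whereas you prove this implication directly by a fundamental-domain construction on each orbital and then glue. Your construction is sound: each fixed-point-free orbital is a convex subset of $\Q$ with no endpoints (hence $\cong\Q$), the half-open fundamental domain $F=[q_0,g(q_0))\cap\Q$ and its $f$-side counterpart $F'$ are both of order type $\{\ast\}+\Q$ (a minimum, no maximum, dense in between), the $g^n$-translates of $F$ partition $O$, and transporting any order isomorphism $F\to F'$ by $g^n\mapsto f^n$ yields an intertwiner on $O$; between orbitals, order preservation of $\hat\phi$ glues these into a global $h$ with $h g h^{-1}=f$. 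The trade-off is that your version is self-contained and more elementary (no external reference needed), at the cost of about a paragraph of routine back-and-forth and gluing verifications that the paper outsources; in exchange you only cover the fixed-point-free case, which is exactly what the lemma needs, whereas Glass's theorem is stated in greater generality.
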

  \begin{proof}
    We use the characterization in \cite[Theorem 2.2.5]{Glass} to check the conjugacy of 
    automorphisms: $f$ and $g$ are conjugate if and only if there exists 
    an order preserving bijection $\psi : \mathcal{O}^*_g \to \mathcal{O}^*_f$ such 
    that $s_g(O) = s_f(\psi(O))$ for every $O \in \mathcal{O}^*_g$. 
    
    We now show that it is legal to define the appropriate bijection $\psi$ as 
    $\psi(O) = O'$ where $O' = \phi(p)$ for some $p \in O$. To show that it is a 
    well-defined map, we need to prove that given $O \in \mathcal{O}^*_g$ and 
    $p, q \in O$, $\phi(p) = \phi(q)$. Suppose the contrary and note that $s_g(O) = 1$ or $s_g(O) = -1$, since $g$ is fixed-point free. We now suppose that $s_g(O) = 1$, the case where $s_g(O) = -1$ is analogous. Then $g(p) > p$ and $g(q) > q$, hence 
    $s_f(\phi(p)) = s_f(\phi(q)) = 1$. Since $f$ is good, and $\phi(p) \neq 
    \phi(q)$ by our assumption, there is an orbital $O' \in \mathcal{O}^*_f$ such 
    that $\phi(p) < O' < \phi(q)$ and $s_f(O') \neq 1$. Using that $\phi$ is 
    surjective and monotone increasing, there exists an $r \in (p, q)$ such that 
    $\phi(r) = O'$. Then $s_f(\phi(r)) \neq 1$, but $r$ is in the same orbital as 
    $p$ and $q$, since orbitals are convex, hence $g(r) > r$. This contradicts 
    \ref{il:g(q) > q <=> s(phi(q)) = 1}. 
    
    The map $\psi$ is increasing and surjective, since $\phi$ is increasing 
    and surjective. One can easily check that conditions \ref{il:g(q) > q <=> s(phi(q)) = 1} and \ref{il:g(q) < q <=> s(phi(q)) = -1} imply that for every $O \in \mathcal{O}^*_g$, $s_g(O) = s_f(\psi(O))$. 
    Hence it remains to show that $\psi$ is injective. 
    
    Let $O, O' \in \mathcal{O}^*_g$ be distinct orbitals with $\psi(O) = \psi(O')$. 
    Using conditions \ref{il:g(q) > q <=> s(phi(q)) = 1} and \ref{il:g(q) < q <=> s(phi(q)) = -1}, we have $s_g(O) = s_g(O')$. If $s_g(O) = s_g(O') = 1$ then using that $g$ is good, there exists an 
    orbital $O'' \in \mathcal{O}^*_g$ between $O$ and $O'$ such that $s_g(O'') \neq 
    1$. Then using the monotonicity of $\psi$ one 
    obtains $\psi(O'') = \psi(O')$, hence $1 \neq s_f(\psi(O'')) = s_f(\psi(O)) = 1$, a contradiction. An analogous argument shows that $s_g(O) = s_g(O') = -1$ also leads to a contradiction, hence the proof of the lemma is complete.
  \end{proof}
  
  The proof that the conjugacy class of a good automorphism is compact biter, rests on a technical construction. We first deal with the case when the automorphism has no fixed points; the general statement will follow easily in the proof of Theorem \ref{t:autQ}. 
  
  \begin{proposition}
    \label{p:cpct translated into conj class of fixed-point free}
    Let $f \in \aut(\Q, <)$ be a good automorphism without fixed points, $\mathcal{C}$ be the conjugacy class of $f$ and $\mathcal{K} \subset \aut(\Q, <)$ be compact. Then there is an automorphism $g \in \aut(\Q, <)$ such that $g^{-1} \mathcal{K} \subseteq \mathcal{C}$. 
  \end{proposition}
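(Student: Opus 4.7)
The plan is to construct $g$ by a back-and-forth procedure so that $g^{-1}h$ is a good, fixed-point-free automorphism of $(\Q,<)$ for every $h \in \mathcal{K}$. Since $f$ is itself good and fixed-point-free, Lemma~\ref{l:conjugacy in aut(Q)}---applied with the map $\phi$ obtained from any order- and sign-preserving bijection $\mathcal{O}^*_{g^{-1}h} \to \mathcal{O}^*_f$, which exists by Cantor's back-and-forth argument because both orbital structures are countable dense linear orders with densely alternating $\pm 1$ signs---will then give that each $g^{-1}h$ is conjugate to $f$, whence $g^{-1}\mathcal{K} \subseteq \mathcal{C}$.

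The crucial input is compactness of $\mathcal{K}$: by the fact recalled just before Subsection~\ref{s:chris}, for every $q \in \Q$ both $\{h(q) : h \in \mathcal{K}\}$ and $\{h^{-1}(q) : h \in \mathcal{K}\}$ are finite, so for every finite $A \subseteq \Q$ only finitely many distinct restrictions $h|_A$ arise from $h \in \mathcal{K}$. I would fix an enumeration $\Q = \{q_n : n \in \omega\}$ and inductively construct order-preserving finite partial injections $g_n \colon D_n \to R_n$ with $D_n, R_n \subseteq \Q$ increasing to $\Q$, preserving three invariants: (a) $q_n \in D_n \cap R_n$; (b) $g_n(q) \neq h(q)$ for every $q \in D_n$ and $h \in \mathcal{K}$ with $h(q) \in R_n$, ensuring $g_n^{-1}h$ has no fixed point where defined; and (c) for each of the finitely many restriction types $h|_{D_n}$ arising from $\mathcal{K}$, between any two distinct same-sign partial orbits of $g_n^{-1}h$ lies a partial orbit of opposite sign. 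Invariants (a) and (b) are easy to preserve, since any extension step forbids only finitely many rationals.

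The substantive task is to maintain~(c). After the standard back-and-forth extension absorbing $q_{n+1}$ into both the domain and the range, I would repair (c) iteratively: whenever two same-sign partial orbits of some $g_{n+1}^{-1}h$ lack an opposite-sign partial orbit between them, pick a fresh rational $r$ strictly between them and extend $g$ at $r$ so that $h(r) - g(r)$ has the prescribed opposite sign (and symmetrically on the range side). The main obstacle is that these repairs must be carried out simultaneously for all finitely many relevant restrictions $h|_{D_{n+1}}$, while also preserving order and avoiding fixed points for every $h \in \mathcal{K}$. This requires careful bookkeeping but no fundamentally new ideas: each individual constraint excludes only finitely many rationals from an open sub-interval of $\Q$, so the density of $\Q$ guarantees a simultaneous admissible choice.

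Taking $g = \bigcup_n g_n$ yields an element of $\aut(\Q,<)$. Invariant (b) then gives that $g^{-1}h$ has no fixed points for any $h \in \mathcal{K}$, and invariant (c) combined with the exhaustion of $\Q$ by the $D_n$ ensures that in the limit the orbital structure of $g^{-1}h$ has densely alternating signs; that is, $g^{-1}h$ is good. Lemma~\ref{l:conjugacy in aut(Q)} then yields $g^{-1}h \in \mathcal{C}$ for every $h \in \mathcal{K}$, completing the argument.
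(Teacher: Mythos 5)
There is a genuine gap, and it concerns the central conflation on which the whole proposal rests: you treat ``good and fixed-point-free'' as if it determined the conjugacy class, but it does not. Your plan ensures only that each $g^{-1}h$ is good and fixed-point-free, and you then claim that Lemma~\ref{l:conjugacy in aut(Q)} applies because ``both orbital structures are countable dense linear orders with densely alternating $\pm 1$ signs.'' That claim is false in two ways. First, the orbital structure of a good fixed-point-free automorphism need not be dense at all: for instance $f(q) = q + 1$ has a single orbital (all of $\Q$ is the convex hull of the orbit of $0$), and more generally any finite alternating string of orbitals is good. Second, even among automorphisms whose orbital order type is the same, the order-preserving sign-preserving bijection $\mathcal{O}^*_{g^{-1}h} \to \mathcal{O}^*_f$ required to invoke the Glass characterisation need not exist; there are continuum many pairwise non-conjugate good fixed-point-free automorphisms, as Corollary~\ref{c:autqcont} and the remark preceding it explain (take closed subsets $F \subset \R \setminus \Q$ of different order types). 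So ``$g^{-1}h$ is good and fixed-point-free'' is far weaker than ``$g^{-1}h \in \mathcal{C}$''.

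Concretely, your invariant (c) never mentions $f$ and therefore cannot steer the orbital structure of $g^{-1}h$ toward that of $f$. To repair this you would have to replace (c) by an invariant that records, for each $h$, a monotone surjection $\phi(\cdot, h) : \Q \to \mathcal{O}^*_f$ compatible with the signs, so that Lemma~\ref{l:conjugacy in aut(Q)} can be applied with that specific $\phi$. This is exactly what the paper's construction does: the functions $\phi_n : H_n \times \mathcal{K} \to \mathcal{O}^*_f$ are built alongside the $g_n$, with a dedicated type of step (Case~3) forcing every prescribed orbital of $f$ into the range of each $\phi(\cdot, h)$, and invariant \ref{i:s=1 <=> g < h, s=-1 <=> g > h} forcing the sign match. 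That bookkeeping---not merely ``between any two same-sign partial orbits insert an opposite sign''---is what pins $g^{-1}h$ down to the single conjugacy class $\mathcal{C}$. Without it, your limit automorphism $g^{-1}h$ could well be good and fixed-point-free yet lie in a different class than $f$.

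Your invariants (a) and (b), and the use of compactness of $\mathcal{K}$ to reduce to finitely many restriction types $h|_{D_n}$, are all correct and in the spirit of the paper's proof, so the scaffolding is right; it is the absence of the $\phi$-tracking that makes the argument as written fail.
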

  
  Along with $g$, we construct a function $\phi : \Q \times \mathcal{K} \to \mathcal{O}^*_f$, recursively. For an automorphism $h \in \mathcal{K}$, $\phi$ will keep track of the orbital of a point $r$ under $g^{-1}h$. Let $r_1, r_2, \dots$ be an enumeration of $\Q$. Let $O_1, O_2, \dots$ be an infinite sequence of elements of $\mathcal{O}^*_f$ containing every such element at least once. Note that $\mathcal{O}^*_f$ may be finite, hence the sequence may contain the same element more than once. At the $n$th step of the recursive construction, we have a finite set $H_n \subset \Q$ and functions $g_n$ and $\phi_n$ (that are partial approximations of the final $g$ and $\phi$). We preserve the following properties of these sets and functions: 
  
  For every $n \in \omega$, $h, h_1, h_2 \in \mathcal{K}$ and $p, p', p'' \in 
  H_n$, where $p' < p''$ and $(p', p'') \cap H_n = \emptyset$, 
  
  \begin{enumerate}[label=(\roman*)]
    \item\label{i:extension} $H_0 \subset H_1 \subset \dots$, $g_0 \subset g_1 
    \subset \dots$ and $\phi_0 \subset \phi_1 \subset \dots$;
    \item\label{i:H_n finite} $H_n \subset \Q$ is finite;
    \item\label{i:g_n increasing} $g_n : H_n \to \Q$ is 
    strictly increasing;
    \item\label{i:phi increasing} $\phi_n : H_n \times \mathcal{K} \to 
    \mathcal{O}^*_f$, and $\phi_n(., h)$ is increasing;
    \item\label{i:surjective} $r_1, \dots, r_{n + 1} \in H_{3n + 1} \cap g_{3n + 2}(H_{3n + 2})$ and $O_1, \dots, O_{n + 1} \in \phi_{3n + 3}(H_{3n + 3}, h)$;
    \item\label{i:no crossing} it cannot happen that 
    $h_1(p') < g_n(p') < h_2(p')$, $h_1(p'') > g_n(p'') > h_2(p'')$;
    \item\label{i:extension is good} if $h(p') > g_n(p')$ and $h(p'') > 
    g_n(p'')$ then $h(r) \ge g_n(p'')$ for every $r \in [p', p'']$; 
    similarly, if $h(p') < g_n(p')$ and $h(p'') < g_n(p'')$ then 
    $h(r) \le g_n(p')$ for every $r \in [p', p'']$ (thus extending $g_n$ in 
    any way to a strictly increasing function on $[p', p'']$, there is no $r 
    \in [p', p'']$ where the value of the extension can be equal to $h(r)$);
    \item\label{i:s=1 <=> g < h, s=-1 <=> g > h} $s_f(\phi_n(p, h)) = 1 
    \Leftrightarrow g_n(p) < h(p)$ and $s_f(\phi_n(p, h)) = -1 \Leftrightarrow 
    g_n(p) > h(p)$;
    \item\label{i:phi image is same} $\phi_n(H_n, h_1) = \phi_n(H_n, h_2)$;
    \item\label{i:s is alternating} the value of $s_f$ is alternating on 
    the image $\phi_n(H_n, h)$, i.e., either $\phi_n(p', h) = \phi_n(p'', 
    h)$ or $s_f(\phi_n(p', h)) \neq s_f(\phi_n(p'', h))$;
    \item\label{i:change for same place only} $h_i(p') > g_n(p')$ and 
    $h_i(p'') < g_n(p'')$ ($i = 1, 2$) (or similarly, $h_i(p') < g_n(p')$ 
    and $h_i(p'') > g_n(p'')$ ($i = 1, 2$)) implies that $\phi_n(p', h_1) = 
    \phi_n(p', h_2)$ and $\phi_n(p'', h_1) = \phi_n(p'', h_2)$.
  \end{enumerate}
  
  \begin{remark}
    Conditions \ref{i:no crossing} and \ref{i:extension is good} are 
    equivalent to the following fact: the rectangle $(p', p'') \times (g_n(p'), g_n(p''))$ has two sides that are 
    opposite such that no $h \in \mathcal{K}$ intersects the interior of any of 
    those sides. 
  \end{remark}
  
  First we show that the inductive construction can be carried out satisfying the conditions.
  
  \begin{claim}
    The sets and functions $H_n$, $g_n$ and $\phi_n$ can be constructed with the above properties. 
  \end{claim}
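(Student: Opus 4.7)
The plan is to construct $H_n$, $g_n$, $\phi_n$ by induction on $n$, starting from the vacuous $H_0 = g_0 = \phi_0 = \emptyset$, for which all eleven conditions hold trivially. At step $n+1$ I would add exactly one new point $p$ to $H_n$, whose choice is dictated by $n \bmod 3$ so that the surjectivity requirement \ref{i:surjective} is eventually met: when $n = 3k$ take $p = r_{k+1}$; when $n = 3k+1$ pick $p$ so that $g_{n+1}(p)$ can be set equal to $r_{k+1}$; when $n = 3k+2$ pick $p$ at which the orbital $O_{k+1}$ will be realised. The real work at each step is to specify $g_{n+1}(p)$ and the orbitals $\phi_{n+1}(p, h)$ for every $h \in \mathcal{K}$ in a way that preserves \ref{i:g_n increasing}--\ref{i:change for same place only}.

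To select $g_{n+1}(p)$, let $p' < p < p''$ be the immediate neighbours of $p$ in $H_{n+1}$ (with the case of a missing neighbour on one side handled separately by extending unboundedly on that side). By compactness of $\mathcal{K}$, the set $B := \{h(p) : h \in \mathcal{K}\}$ is finite, and the Remark rephrases the inductive hypotheses \ref{i:no crossing}--\ref{i:extension is good} at the pair $(p', p'')$ as the geometric assertion that the open rectangle $(p', p'') \times (g_n(p'), g_n(p''))$ has a pair of opposite sides whose interiors are avoided by every $h \in \mathcal{K}$. I would pick $g_{n+1}(p) \in (g_n(p'), g_n(p'')) \setminus B$ sufficiently close to the appropriate horizontal side of the rectangle so that both subrectangles $(p', p) \times (g_n(p'), g_{n+1}(p))$ and $(p, p'') \times (g_{n+1}(p), g_n(p''))$ inherit the same no-crossing and extension-is-good picture; finiteness of $B$ and openness of the admissible window make this easy.

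For $\phi_{n+1}(p, h)$, condition \ref{i:s=1 <=> g < h, s=-1 <=> g > h} forces $s_f(\phi_{n+1}(p, h)) = \sign(h(p) - g_{n+1}(p))$, and monotonicity \ref{i:phi increasing} restricts the chosen orbital to lie (weakly) between $\phi_n(p', h)$ and $\phi_n(p'', h)$ in the order of $\mathcal{O}^*_f$. I would partition $\mathcal{K}$ into finitely many equivalence classes according to the sign pattern of $h$ across $(p', p)$ and $(p, p'')$; assign the same orbital to every $h$ in a given class, which makes \ref{i:change for same place only} automatic; and draw these orbitals from a common pool so that the image $\phi_{n+1}(H_{n+1}, h)$ is the same set for every $h$, as required by \ref{i:phi image is same}. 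Goodness of $f$ provides opposite-sign orbitals between any two same-sign orbitals already in the pool, which lets us extend the pool when needed and preserve the alternation condition \ref{i:s is alternating}.

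The main obstacle I anticipate is the simultaneous maintenance of the coupled conditions \ref{i:phi image is same}, \ref{i:s is alternating} and \ref{i:change for same place only}: the alternating sign pattern of $\phi_{n+1}(\cdot, h)$ along $H_{n+1}$ depends on $h$, while the image set must not, and \ref{i:change for same place only} ties certain values across different $h$'s. Goodness of $f$ together with the equivalence-class bookkeeping described above is exactly the mechanism that resolves this tension: whenever an orbital genuinely new to the common pool must be introduced at $p$ for some class, the inductive invariants ensure that the same orbital can be used for every $h$ whose crossing pattern dictates it, while all other $h$'s may be assigned an orbital already present. Once $g_{n+1}(p)$ and $\phi_{n+1}(p,h)$ are chosen in this coordinated way, the remaining conditions reduce to a routine case check.
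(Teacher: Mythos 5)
Your high-level architecture is identical to the paper's: start from the empty approximations, cycle by $n \bmod 3$ through domain surjectivity, range surjectivity, and orbital surjectivity, exploit compactness to make all sets of values finite, and partition $\mathcal{K}$ into finitely many classes according to crossing patterns. That part is right.

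The genuine gap is your assumption that ``at step $n+1$ I would add exactly one new point $p$ to $H_n$.'' This is not maintainable, and the paper's proof does not attempt it: in its Case~1d it adds a buffer point $p$ in addition to $r_{m+1}$, and in its Case~3c/3d it adds $3c+2$ new points inside a single gap of $H_n$ (two ``fences'' plus three points per family $\mathcal{K}^j$). There are two independent obstructions to the one-point scheme.

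First, the geometric maintenance of \ref{i:no crossing} and \ref{i:extension is good} can fail after inserting a single $p$. Suppose the vertical sides of the rectangle over $(p',p'')$ are the avoided pair, so there is a diagonal-crossing $h$ with $h(p')<g_n(p')$ and $h(p'')>g_n(p'')$. If $g_{n+1}(p)=q$ is placed so that $g_n(p')<h(p)<q$, then in the left subrectangle $(p',p)\times(g_n(p'),q)$ the graph of $h$ enters through the interior of the bottom side (since $h(p')<g_n(p')<h(p)$) and the right vertical side is hit (since $h(p)\in(g_n(p'),q)$), while if instead $q<h(p)<g_n(p'')$ the right subrectangle $(p,p'')\times(q,g_n(p''))$ has its left vertical side hit and its top side crossed. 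Either way one of the two subrectangles has no avoided pair of opposite sides. Choosing $q$ close to a horizontal side of the big rectangle does not help, because the problem is the horizontal coordinate at which $h$ enters the box, and that is controlled by the auxiliary point $p<h^{-1}(g_n(p'))$ that the paper inserts, not by $q$.

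Second, in the orbital-surjectivity phase the single-point scheme cannot realise the new orbital $O_{m+1}$ for every $h$. Condition \ref{i:s=1 <=> g < h, s=-1 <=> g > h} forces $s_f(\phi_{n+1}(p,h))=\sign\bigl(h(p)-g_{n+1}(p)\bigr)$, so if $s_f(O_{m+1})=1$ you may assign $\phi_{n+1}(p,h)=O_{m+1}$ only to those $h$ lying strictly above $g_{n+1}$ at $p$. Your proposal is to give the remaining $h$'s ``an orbital already present,'' but then $O_{m+1}\notin\phi_{n+1}(H_{n+1},h)$ for those $h$, which is exactly what condition \ref{i:surjective} demands after finitely many steps; the common-pool bookkeeping does not rescue this because \ref{i:phi image is same} forces the pool to be the whole image for every $h$. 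The paper's resolution is to spend three new points per family $\mathcal{K}^j$ so that every $h$ passes through a window where the required sign occurs, and this multi-point gadget is not a ``routine case check'' that your single-point outline can absorb.
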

  \begin{proof}
    We prove the claim by induction on $n$. For $n = 0$, let $H_0 = g_0 = \phi_0 = \emptyset$. Now suppose that $H_n$, $g_n$ and $\phi_n$ are given with the above properties, using them, we construct the suitable $H_{n + 1}$, $g_{n + 1}$ and $\phi_{n + 1}$. There are three cases according to the remainder of $n$ mod $3$: if the remainder is $0$, we add the next rational number (according to the enumeration $(r_i)$) to $H_{n + 1}$, and hence to the domain of $g_{n + 1}$. If the remainder is $1$, we add the next rational number to the range of $g_{n + 1}$. If the remainder is $2$, we make sure that the next orbital (according to the enumeration $(O_i)$) is in the range of $\phi(., h)$ for every $h \in \mathcal{K}$. 
    
    \bigskip
    {\bf Case 1: $n = 3m$.}  At this step, we make sure that $r_{m + 1} \in H_{n + 
      1}$. If already $r_{m + 1} \in H_n$ then let $H_{n + 1} = H_n$, $g_{n + 1} 
    = g_n$ and $\phi_{n + 1} = \phi_n$. Otherwise, there are multiple cases 
    according to the existence of $p' \in H_n$ with $p' < r_{m + 1}$, $p'' \in 
    H_n$ with $r_{m + 1} < p''$, and whether $g_n(p') < h(p')$ or $g_n(p') > 
    h(p')$, $g_n(p'') < h(p'')$ or $g_n(p'') > h(p'')$. 
    
    Case 1a: there are neither $p' \in H_n$ with $p' < r_{m + 1}$ nor $p'' \in 
    H_n$ with $r_{m + 1} < p''$ (that is, $H_n = \emptyset$, $n = 0$). If 
    $s_f(O_1) = 1$ then we find $q \in \Q$ with $q < h(r_{m + 1})$ 
    for every $h \in \mathcal{K}$, otherwise, we find $q \Q$ 
    with $q > h(r_{m + 1})$ for every $h \in \mathcal{K}$. Such a $q$ exists, 
    since $\mathcal{K}$ is compact, thus $\{h(r_{m + 1}) : h \in \mathcal{K}\}$ 
    is finite. Now we set $H_{n + 1} = \{r_{m + 1}\}$, $g_{n + 1}(r_{m + 1}) = 
    q$, $\phi_{n + 1}(r_{m + 1}, h) = O_1$ for every $h \in \mathcal{K}$. 
    
    Case 1b: there is a $p' \in H_n$ with $p' < r_{m + 1}$ but there is no $p'' 
    \in H_n$ with $r_{m + 1} < p''$. Let $p'$ be the largest element in $H_n$, 
    clearly $p' < r_{m + 1}$. Let $q' = g_n(p')$. Using \ref{i:phi image is 
      same}, $\phi_n(p', h)$ is the same for every $h \in \mathcal{K}$, since it 
    is the largest element in the common image $\phi_n(H_n, h)$. Let $O = 
    \phi_n(p', h)$ for some $h \in \mathcal{K}$. 
    Depending on $s_f(O)$, $g_n(p') < h(p')$ for every $h \in \mathcal{K}$ or 
    $g_n(p') > h(p')$ for every $h \in \mathcal{K}$ using \ref{i:s=1 <=> g < 
      h, s=-1 <=> g > h}. In the first case, choose $t \in \Q$ such 
    that $q' < t < h(p')$ for every $h \in \mathcal{K}$. 
    Then set $H_{n + 1} = H_n \cup \{r_{m + 1}\}$ and let $g_{n + 1}$ extend 
    $g_n$ with $g_{n + 1}(r_{m + 1}) = t$, and let $\phi_{n + 1}$ extend 
    $\phi_n$ with $\phi_{n + 1}(r_{m + 1}, h) = O$ for every $h \in 
    \mathcal{K}$. 
    
    In the second case, let $h(r_{m + 1}) < t$ for every $h \in \mathcal{K}$, 
    also satisfying $t > q'$. Choose $q \in (q', t)$ such 
    that $q > h(r_{m + 1})$ for every $h \in \mathcal{K}$. As 
    $h^{-1}(q') > p'$ for every $h \in \mathcal{K}$, there exists $p \in (p', 
    r_{m + 1})$ such that $p < h^{-1}(q')$ for every $h \in \mathcal{K}$. 
    Now set $H_{n + 1} = H_n \cup \{r_{m + 1}, p\}$, and let 
    $g_{n + 1}$ and $\phi_{n + 1}$ extend the appropriate functions with $g_{n 
      + 1}(r_{m + 1}) = t$, $g_{n + 1}(p) = q$ and $\phi_{n + 1}(r_{m + 1}, h) = 
    \phi_{n + 1}(p, h) = O$ for every $h \in \mathcal{K}$. 
    
    Case 1c: there is no $p' \in H_n$ with $p' < r_{m + 1}$ but there is a 
    $p'' \in H_n$ with $r_{m + 1} < p''$. This case can be handled similarly as Case 1b.
    
    Case 1d: there is a $p' \in H_n$ with $p' < r_{m + 1}$, there is a 
    $p'' \in H_n$ with $r_{m + 1} < p''$, and for the largest such $p'$ and 
    the smallest such $p''$, there is no $h \in \mathcal{K}$ with 
    $g_n(p') < h(p')$ and $g_n(p'') > h(p'')$. In this case, let $\mathcal{K}' 
    = \{ h \in \mathcal{K} : g_n(p') > h(p') \text{ and } g_n(p'') < h(p'')\}$, 
    where $p' \in H_n$ is the largest with $p' < r_{m + 1}$ and $p'' \in 
    H_n$ is the smallest with $p'' > r_{m + 1}$. Note that $\mathcal{K}'$ may 
    be the empty set. Let $q' = g_n(p')$ and $q'' = g_n(p'')$. 
    Choose $t \in (q', q'')$ such that $t > h(r_{m + 1})$ for each  
    $h \in \mathcal{K}'$ with $h(r_{m + 1}) < q''$. Such a $t$ exists, since 
    the compactness of $\mathcal{K}'$ implies that $\{h(r_{m + 1}) : h \in 
    \mathcal{K}', h(r_{m + 1}) < q''\}$ is finite. We will set 
    $g_{n + 1}(r_{m + 1}) = t$, but we need to define the value of $g_{n + 1}$ 
    at one more place. Choose $q \in (q', t)$ with $q > h(r_{m + 1})$ for each 
    $h \in \mathcal{K}'$ with $h(r_{m + 1}) < q''$. For every $h \in 
    \mathcal{K}'$ we have $h(p') < q'$, hence also $p' < h^{-1}(q')$. 
    Therefore there is a $p \in (p', r_{m + 1})$ for which $p < h^{-1}(q')$ for 
    every $h \in \mathcal{K}'$. 
    
    Now let $H_{n + 1} = H_n \cup \{p, r_{m + 1}\}$, $g_{n + 1}$ extend $g_n$ 
    with $g_{n + 1}(p) = q$, $g_{n + 1}(r_{m + 1}) = t$. For $h \in 
    \mathcal{K}'$, either $h(r_{m + 1}) < t$ or $h(r_{m + 1}) > t$. If $h(r_{m 
      + 1}) < t$ then let $\phi_{n + 1}(r_{m + 1}, h) = \phi_n(p', h)$, if 
    $h(r_{m + 1}) > t$ then let $\phi_{n + 1}(r_{m + 1}, h) = \phi_n(p'', h)$. 
    In both cases, let $\phi_{n + 1}(p, h) = \phi_n(p', h)$. If $h \in 
    \mathcal{K} \setminus \mathcal{K}'$ then let $\phi_{n + 1}(p, h) = \phi_{n + 1}(r_{m + 1}, h) = \phi_n(p', h)$. Note that using \ref{i:s=1 <=> g < h, s=-1 <=> g > h}, $s_f(\phi_n(p', h)) = s_f(\phi_n(p'', h))$, thus \ref{i:s is alternating} implies that $\phi_n(p', h) = \phi_n(p'', h)$. All of the properties can be checked easily.
    
    Case 1e: there is a $p' \in H_n$ with $p' < r_{m + 1}$, there is a 
    $p'' \in H_n$ with $r_{m + 1} < p''$, and for the largest such $p'$ and 
    the smallest such $p''$, there is no $h \in \mathcal{K}$ with 
    $g_n(p') > h(p')$ and $g_n(p'') < h(p'')$. 
    Now let $\mathcal{K}' = \{ h \in \mathcal{K} : g_n(p') < h(p') \text{ and } 
    g_n(p'') > h(p'')\}$, where again, $p' \in H_n$ is the largest with $p' < 
    r_{m + 1}$ and $p'' \in H_n$ is the smallest with $p'' > r_{m + 1}$. Let 
    $q' = g_n(p')$ and $q'' = g_n(p'')$. The set $\{h(p'') : h \in 
    \mathcal{K}'\}$ is finite, hence there is a $t \in (q', q'')$ with $t > 
    h(p'')$ for every $h \in \mathcal{K}'$. Let $H_{n + 1} = H_n \cup 
    \{r_{m + 1}\}$, $g_{n + 1}(r_{m + 1}) = t$ and $\phi_{n + 1}(r_{m + 1}, h) 
    = \phi_n(p'', h)$ for every $h \in \mathcal{K}$. Using the fact that for no 
    $h \in \mathcal{K}$ can $h$ and any strictly increasing extension of $g_{n 
      + 1}$ have the same values on $[r_{m + 1}, p'']$, one can easily check that 
    every property is satisfied. 
    
    Using \ref{i:no crossing}, these cover all sub-cases of Case 1. Now we 
    turn to the second case. 
    
    \bigskip
    {\bf Case 2: $n = 3m + 1$.}  At this step, we make sure that $r_{m + 1} \in g_{n + 1}(H_{n + 1})$. The proofs are analogous to that of Case 1, but we include them for the sake of completeness. 
    If already $r_{m + 1} \in g_n(H_n)$ then let $H_{n + 1} = H_n$, $g_{n + 1} = g_n$ and $\phi_{n + 1} = \phi_n$. Otherwise, similarly 
    as in Case 1, there are multiple sub-cases according to the existence of 
    $q' \in g_n(H_n)$ with $q' < r_{m + 1}$, $q'' \in g_n(H_n)$ with $r_{m + 1} 
    < q''$, and whether there exists an $h \in \mathcal{K}$ such that $g_n(p') 
    < h(p')$ or $g_n(p') > h(p')$, $g_n(p'') < h(p'')$ or $g_n(p'') > h(p'')$, 
    where $p' = g_n^{-1}(q')$ and $p'' = g_n^{-1}(q'')$. Since $r_{m + 1} \in H_n$ we do not have to deal with the case $H_n = \emptyset$. 
    
    Case 2a: there is a $q' \in g_n(H_n)$ with $q' < r_{m + 1}$ but there is no 
    $q'' \in g_n(H_n)$ with $r_{m + 1} < q''$. Let $q'$ be the largest element 
    in $g_n(H_n)$, clearly $q' < r_{m + 1}$. As before, $g_n(p') < h(p')$ for 
    every $h \in \mathcal{K}$ or $g_n(p') > h(p')$ for every $h \in 
    \mathcal{K}$, where $p' = g_n^{-1}(q')$. In the first case, choose $r > p'$ and $r > h^{-1}(r_{m + 1})$ for 
    every $h \in \mathcal{K}$. Such an $r$ exists, since $\{h^{-1}(r_{m + 1}) : h \in \mathcal{K}\}$ is finite. Let $q \in (q', r_{m + 1})$ with $q < h(p')$ for every $h \in \mathcal{K}$, and choose $p \in (p', r)$ with $p > h^{-1}(r_{m + 1})$ for every $h \in \mathcal{K}$. Then let $H_{n + 1} = H_n \cup \{r, p\}$, and let $g_{n + 1}$ and $\phi_{n + 1}$ extend $g_n$ and $\phi_n$, respectively, with $g_{n + 1}(r) = r_{m + 1}$, $g_{n + 1}(p) = q$ and $\phi_{n + 1}(r, h) = \phi_{n + 1}(p, h) = \phi_n(p', h)$ for every $h \in \mathcal{K}$. 
    
    In the second case, choose $r > p'$ with $r < h^{-1}(q')$ 
    for every $h \in \mathcal{K}$. Such an $r$ exists, since for 
    every $h \in \mathcal{K}$, $h(p') < q'$ implies $p' < h^{-1}(q')$ and 
    $\{h^{-1}(q') : h \in \mathcal{K}\}$ is finite. Then set $H_{n + 1} = H_n 
    \cup \{r\}$, and let $g_{n + 1}(r) = r_{m + 1}$ and $\phi_{n + 1}(p, h) = 
    \phi_n(p', h)$ for every $h \in \mathcal{K}$. 
    
    Case 2b: there is no $q' \in g_n(H_n)$ with $q' < r_{m + 1}$ but there is a 
    $q'' \in g_n(H_n)$ with $r_{m + 1} < q''$. This case can be handled 
    similarly to Case 2a.
    
    Case 2c: there is a $q' \in g_n(H_n)$ with $q' < r_{m + 1}$, there is a 
    $q'' \in H_n$ with $r_{m + 1} < q''$, and for the largest such $q'$ and 
    the smallest such $q''$, there is no $h \in \mathcal{K}$ with 
    $g_n(p') < h(p')$ and $g_n(p'') > h(p'')$, where $p' = g_n^{-1}(q')$ and 
    $p'' = g_n^{-1}(q'')$. This is analogous to Case 1e. There exists $r \in 
    (p', p'')$ with $h^{-1}(q') > r$ for every $h \in \mathcal{K}$ such 
    that $g_n(p') > h(p')$ and $g_n(p') < h(p')$. As before, set $H_{n + 1} = 
    H_n \cup \{r\}$ and let $g_{n + 1}$ extend $g_n$ with $g_{n + 1}(r) = r_{m 
      + 1}$, and $\phi_{n + 1}$ extend $\phi_n$ with $\phi_{n + 1}(r, h) = 
    \phi_n(p', h)$ for every $h \in \mathcal{K}$. 
    
    Case 2d: there is a $q' \in g_n(H_n)$ with $q' < r_{m + 1}$, there is a 
    $q'' \in H_n$ with $r_{m + 1} < q''$, and for the largest such $q'$ and 
    the smallest such $q''$, there is no $h \in \mathcal{K}$ with 
    $g_n(p') > h(p')$ and $g_n(p'') < h(p'')$, where $p' = g_n^{-1}(q')$ and 
    $p'' = g_n^{-1}(q'')$. This is analogous to Case 1d. Let $\mathcal{K}' = \{ 
    h \in \mathcal{K} : g_n(p') < h(p') \text{ and } g_n(p'') > h(p'')\}$, 
    this may again be the empty set. Choose $r \in (p', p'')$ such that 
    $r < h^{-1}(r_{m + 1})$ for each $h \in \mathcal{K}'$ with $h^{-1}(r_{m 
      + 1}) > p'$. There is a $q \in (r_{m + 1}, q'')$ with $q > h(p'')$ for 
    every $h \in \mathcal{K}'$. Choose $p \in (r, p'')$ with $p < h^{-1}(r_{m + 
      1})$ for each $h \in \mathcal{K}'$ with $h^{-1}(r_{m + 1}) > p'$.
    
    Now let $H_{n + 1} = H_n \cup \{p, r\}$, $g_{n + 1}$ extend $g_n$ 
    with $g_{n + 1}(r) = r_{m + 1}$, $g_{n + 1}(p) = q$. For $h \in 
    \mathcal{K}'$, either $h^{-1}(r_{m + 1}) \le p'$ or $h^{-1}(r_{m + 1}) > 
    p$. If $h^{-1}(r_{m + 1}) \le p'$ then let $\phi_{n + 1}(r, h) = \phi_n(p', 
    h)$, if $h^{-1}(r_{m + 1}) > p$ then let $\phi_{n + 1}(r, h) = \phi_n(p'', 
    h)$. In both cases, let $\phi_{n + 1}(p, h) = \phi_n(p'', h)$. 
    If $h \in \mathcal{K} \setminus \mathcal{K}'$ then let $\phi_{n + 1}(p, h) 
    = \phi_{n + 1}(r, h) = \phi_n(p', h)$. 
    
    Again using \ref{i:no crossing}, these cover all sub-cases of Case 2. Now 
    we turn to the third case. 
    
    \bigskip
    {\bf Case 3: $n = 3m + 2$.} At this step, we make sure that $O_{m + 1} \in \phi_{n + 1}(H_{n + 1}, h)$ for every $h \in \mathcal{K}$. Note throughout that there is no $O \in \mathcal{O}^*_f$ with $s_f(O) = 0$. If $O_{m + 1} \in \phi_n(H_n, h)$ for any (hence, by \ref{i:phi image is same} for 
    every) $h \in \mathcal{K}$ then let $H_{n + 1} = H_n$, 
    $g_{n + 1} = g_n$ and $\phi_{n + 1} = \phi_n$. If this is not the case, 
    we consider the sub-cases according to $\phi_n(H_n, h_0)$ for a fixed $h_0 
    \in \mathcal{K}$. We suppose throughout that $s_f(O_{m + 1}) = 1$. The case 
    $s_f(O_{m + 1}) = -1$ is similar. Also, note that $H_n \neq \emptyset$, as, 
    for example, $r_1 \in H_n$. 
    
    Case 3a: $O_{m + 1} > O$ for every $O \in \phi_n(H_n, h_0)$, and for the 
    largest $O \in \phi_n(H_n, h_0)$ (with respect to $<$), $s_f(O) = -1$. 
    Let $p$ be the largest element in $H_n$, $q = g_n(p)$, then $O = \phi_n(p, 
    h_0)$. This means, using \ref{i:phi image is same} and \ref{i:s=1 <=> g 
      < h, s=-1 <=> g > h} that $\phi_n(p, h) = O$ and $g_n(p) > h(p)$ for 
    every $h \in \mathcal{K}$. Choose any $t > q$ then, as $\{h^{-1}(t) : h \in \mathcal{K}\}$ is finite, there exists $r > p$ with $r > h^{-1}(t)$ for every $h \in \mathcal{K}$. Now let $H_{n + 1} = H_n \cup \{r\}$, let $g_{n + 1}$ 
    extend $g_n$ with $g_{n + 1}(r) = t$ and let 
    $\phi_{n + 1}$ extend $\phi_n$ with $\phi_{n + 1}(r, h) = O_{m + 1}$ for 
    every $h \in \mathcal{K}$. One can easily check that the necessary 
    conditions still hold. 
    
    Case 3b: $O_{m + 1} > O$ for every $O \in \phi_n(H_n, h_0)$, and for the 
    largest $O \in \phi_n(H_n, h_0)$ (with respect to $<$), $s_f(O) = 1$. Let 
    $p$ be the largest element in $H_n$, $q = g_n(p)$, then $O = \phi_n(p, 
    h_0)$. Using that $f$ is good, there exists $O' \in \mathcal{O}^*_f$ with $O < 
    O' < O_{m + 1}$ and $s_f(O') = -1$. Now choose $r'> p$ and 
    choose $t' > q$ with $t' > h(r')$ for every $h \in 
    \mathcal{K}$. Then choose $t'' > t'$ and choose $r'' > r'$ with $r'' > h^{-1}(t'')$ for every $h \in \mathcal{K}$. 
    Now let $H_{n + 1} = H_n \cup \{r', r''\}$, and let $g_{n + 1}$ extend 
    $g_n$ with $g_{n + 1}(r') = t'$ and $g_{n + 1}(r'') = t''$, and let 
    $\phi_{n + 1}$ extend $\phi_n$ with $\phi_{n + 1}(r', h) = O'$ and $\phi_{n 
      + 1}(r'', h) = O_{m + 1}$ for every $h \in \mathcal{K}$. 
    
    The cases where $O_{m + 1} < O$ for every $O \in \phi_n(H_n, h_0)$ are 
    similar to the ones above. 
    
    Case 3c: $O_{m + 1}$ is between elements of $\phi_n(H_n, h_0)$, and if $O'$ 
    is the largest element of $\phi_n(H_n, h_0)$ with $O' < O_{m + 1}$ and 
    $O''$ is the smallest element of $\phi_n(H_n, h_0)$ with $O_{m + 1} < O''$ 
    then $s_f(O') = -1$ and $s_f(O'') = 1$. In this case, choose $O \in 
    \mathcal{O}^*_f$ with $O_{m + 1} < O < O''$ and $s_f(O) = -1$, again, such an 
    $O$ exists because $f$ is good. The orbitals $O'$ and $O''$ are 
    neighbouring ones in $\phi_n(H_n, h)$ for every $h \in \mathcal{K}$. 
    
    Notice that for every $h \in \mathcal{K}$ there exists a unique pair of 
    neighbouring points $p', p'' \in H_n$ with $\phi_n(p', h) = O'$ and 
    $\phi_n(p'', h) = O''$. Therefore, we can partition $\mathcal{K}$ into 
    finitely many compact sets according to this pair. We define $g_{n + 1}$ 
    separately on each such interval $(p', p'')$, that is, where $p'$ and $p''$ 
    are neighbouring points in $H_n$ and $\phi_n(p', h) = O'$, $\phi_n(p'', h) = 
    O''$ for some $h \in \mathcal{K}$. 
    
    So let $p', p''$ be such elements of $H_n$ and let $\mathcal{K}' = \{h \in 
    \mathcal{K} : \phi_n(p', h) = O' \text{ and } \phi_n(p'', h) = O''\}$. 
    Using the facts that $s_f(O') = -1$, $s_f(O'') = 1$ and \ref{i:s=1 <=> g < h, 
      s=-1 <=> g > h}, we have $g_n(p') > h(p')$ and $g_n(p'') < h(p'')$ for 
    every $h \in \mathcal{K}'$. Let $q' = g_n(p')$ and $q'' = g_n(p'')$, and 
    choose $q \in (q', q'')$. Let $\{r^1, r^2, \dots, r^c\} = 
    \{h^{-1}(q) : h \in \mathcal{K}'\}$, where $r^1 < r^2 < \dots < r^c$. Note 
    that $h(p') < g_n(p') = q' < q < q'' = g_n(p'') < h(p'')$ for every $h \in 
    \mathcal{K}'$, hence $p' < r^1$ and $r^c < p''$. For $1 \le j \le c$, let 
    $\mathcal{K}^j = \{h \in \mathcal{K}' : h^{-1}(q) = r^j\}$. 
    
    Choose $t \in (q', q)$ with $t > h(r^j)$ for every $1 \le j \le c$ 
    and every $h \in \mathcal{K}'$ such that $h(r^j) < q$. From now on, the 
    values of $g_{n + 1}|_{(p', p'')}$ on newly defined points will always be 
    at least $t$. This will achieve that if we add new points to take care of 
    the functions in $\mathcal{K}^j$ for some $j$, then our choices will not 
    interfere with the functions in $\mathcal{K}' \setminus \mathcal{K}^j$. 
    
    Choose $r \in (p', p'')$ with $r < h^{-1}(q')$ for every $h \in 
    \mathcal{K}'$. By setting $g_{n + 1}(r) = t$ and extending it to a strictly 
    increasing function, it can be easily seen that the extension cannot have a 
    common value with any $h \in \mathcal{K}'$ on the interval $(p', r)$. Let 
    $t^1_1 \in (t, q)$ be arbitrary and choose $r^1_1 \in (r, r^1)$ 
    with $t^1_1 < h(r^1_1)$ for every $h \in \mathcal{K}^1$. 
    Then choose $r^1_2 \in (r^1_1, r^1)$ and choose $t^1_2 \in (t^1_1, q)$ such 
    that $t^1_2 > h(r^1_2)$ for every $h \in \mathcal{K}^1$. Then let $r^1_3 = 
    r^1$ and choose $t^1_3 \in (t^1_2, q)$. 
    
    We handle the families $\mathcal{K}^j$ for $j \ge 2$ similarly. Choose 
    $t^j_1 \in (t^{j - 1}_3, q)$ and then choose $r^j_1 \in (r^{j - 1}_3, r^j)$ 
    such that $h(r^j_1) > t^j_1$ for every $h \in \mathcal{K}^j$. Then let 
    $r^j_2 \in (r^j_1, r^j)$ and choose $t^j_2 \in (t^j_1, q)$ with $t^j_2 > 
    h(r^j_2)$ for every $h \in \mathcal{K}^j$. Then let $r^j_3 = r^j$ 
    and choose $t^j_3 \in (t^j_2, q)$. 
    
    After recursively choosing the rational numbers above for every $j \le c$, 
    we choose $p \in (r^c_3, p'')$ such that $p > h^{-1}(q'')$ for 
    every $h \in \mathcal{K}'$. Now we will set $H_{n + 1} \cap (p', p'') = 
    (H_n \cap (p', p'')) \cup \{r, p, r^j_\ell : 1 \le j \le c, 1 \le \ell \le 
    3\}$. Let $g_{n + 1}$ extend $g_n$ with $g_{n + 1}(r) = t$, $g_{n + 1}(p) = 
    q$ and $g_{n + 1}(r^j_\ell) = t^j_\ell$ for every $1 \le j \le c$ and $1 
    \le \ell \le 3$. 
    Let $\phi_{n + 1}$ extend $\phi_n$ with $\phi_{n + 1}(r, h) = O'$, $\phi_{n 
      + 1}(p, h) = O''$ for every $h \in \mathcal{K}'$. Also, let $\phi_{n + 
      1}(r^j_\ell, h) = O'$ for every $\ell$ if $h \in \mathcal{K}^{j'}$ with $j' 
    > j$, and $\phi_{n + 1}(r^j_\ell, h) = O''$ if $j' < j$. If $j' = j$ then 
    let $\phi_{n + 1}(r^j_1, h) = O_{m + 1}$, $\phi_{n + 1}(r^j_2, h) = O$ and 
    $\phi_{n + 1}(r^j_3, h) = O''$. For every $h \in \mathcal{K} \setminus 
    \mathcal{K}'$, let $\phi_{n + 1}(x, h) = \phi_n(p', h) = \phi_n(p'', h)$, 
    for every $x \in\{r, p, r^j_\ell : 1 \le j \le c, 1 \le \ell \le 3\}$, 
    where we used \ref{i:s is alternating} for the last equality. 
    
    We do the same in every interval of the form $(p', p'')$, where $p'$ and 
    $p''$ are neighbours in $H_n$, and $\phi_n(h,p') = O'$ and $\phi_n(h,p'') = 
    O''$ for some $h \in \mathcal{K}$. Extending $g_{n}$ and $\phi_n$ 
    appropriately, one obtains $H_{n + 1}$, $g_{n + 1}$ and $\phi_{n + 1}$ with 
    the necessary conditions. We note that the choice of $t$ ensures that 
    condition \ref{i:extension is good} is satisfied.
    
    Case 3d: $O_{m + 1}$ is between elements of $\phi_n(H_n, h_0)$, and if $O'$ 
    is the largest element of $\phi_n(H_n, h_0)$ with $O' < O_{m + 1}$ and 
    $O''$ is the smallest element of $\phi_n(H_n, h_0)$ with $O_{m + 1} < O''$ 
    then $s_f(O') = 1$ and $s_f(O'') = -1$. This case can be handled quite 
    similarly as Case 3c. Choose $O \in \mathcal{O}^*_f$ with $O' < O < O_{m + 1}$ 
    and $s_f(O) = -1$. Again the unique pairs of neighbouring points $p', p'' \in 
    H_n$ with $\phi_n(p', h) = O'$ and $\phi_n(p'', h) = O''$ define a 
    partition of $\mathcal{K}'$. So let $p', p'' \in H_n$ be such a pair, we 
    set $q' = g_n(p')$ and $q'' = g_n(p'')$. 
    
    Let $p \in (p', p'')$ be arbitrary and let $\{t^1, \dots, t^c\} = \{h(p) : 
    h \in \mathcal{K}'\}$, where $\mathcal{K}' = \{h \in \mathcal{K} : h(p') > 
    g_n(p') \text{ and } h(p'') < g_n(p'')\}$, such that $t^1 < \dots < t^c$. 
    We set $\mathcal{K}^j = \{h \in \mathcal{K}' : h(p) = t^j\}$. Now one can 
    choose $r \in (p', p)$ with $h^{-1}(t^j) < r$ for every $h \in 
    \mathcal{K}'$ and $1 \le j \le c$ if $h^{-1}(t^j) < p$. 
    Let $t \in (q', t^1)$ be such that $t < h(p')$ for every $h \in 
    \mathcal{K}'$. Now suppose that for $j' < j$ and $1 \le \ell \le 3$ the 
    points $r^{j'}_\ell$ and $t^{j'}_\ell$ are given. Then choose $r^j_1$ 
    arbitrarily for the set $(r, p)$ if $j = 1$ and from $(r^{j - 1}_3, p)$ if 
    $j > 1$. Then choose $t^j_1$ from $(t, t^j)$ if $j = 1$ and from $(t^{j - 
      1}_3, t^j)$ if $j > 1$ such that $h(r^j_1) < t^j_1$ for every $h \in 
    \mathcal{K}^j$. Then choose $t^j_2 \in (t^j_1, t^j)$ and choose $r^j_2 \in 
    (r^j_1, p)$ such that $h(r^j_2) > t^j_2$ for every $h \in \mathcal{K}^j$. 
    Finally, choose $r^j_3 \in (r^j_2, p)$ and set $t^j_3 = t^j$. 
    
    After recursively choosing the points $r^j_\ell$ and $t^j_\ell$, choose $q 
    \in (t^c, q'')$ such that $q > h(p'')$ for every $h \in \mathcal{K}'$. As 
    before, let $H_{n + 1} \cap (p', p'') = (H_n \cap (p', p'')) \cup \{r, p, 
    r^j_\ell : 1 \le j \le c, 1 \le \ell \le 3\}$, and define $g_{n + 1}(r) = 
    t$, $g_{n + 1}(p) = q$ and $g_{n + 1}(r^j_\ell) = t^j_\ell$ for every $1 
    \le j \le c$ and $1 \le \ell \le 3$. For $h \in \mathcal{K}^j$ let $\phi_{n 
      + 1}(r, h) = O'$, $\phi_{n + 1}(r^{j'}_\ell, h) = O'$ for every $j' < j$ 
    and $1 \le \ell \le 3$, $\phi_{n + 1}(r^j_1, h) = O$, $\phi_{n + 1}(r^j_2, 
    h) = O_{m + 1}$, $\phi_{n + 1}(r^j_3, h) = O''$, and $\phi_{n + 
      1}(r^{j''}_\ell, h) = \phi_{n + 1}(p, h) = O''$ for every $j'' > j$, $1 \le 
    \ell \le 3$. For every $h \in \mathcal{K} \setminus \mathcal{K}'$ we set 
    $\phi_{n + 1}(x, h) = \phi_{n}(p', h)$ for every $x \in (H_{n + 1} \cap 
    (p', p'')) \setminus H_n$. 
    
    It is straightforward to check that $H_{n + 1}$, $g_{n + 1}$ and $\phi_{n + 
      1}$ obtained in this way satisfy the conditions.
  \end{proof}

  Now we show that the inverse of the function constructed above translates $\mathcal{K}$ into $\mathcal{C}$. 

  \begin{proof}[Proof of Proposition \ref{p:cpct translated into conj class of fixed-point free}]
    It is clear using \ref{i:extension} and \ref{i:surjective} that $g = \bigcup_n g_n : \Q \to \Q$ and $\phi = \bigcup_n \phi_n : \Q \times \mathcal{K} \to \mathcal{O}^*_f$ are surjective functions. Using \ref{i:g_n increasing} and \ref{i:phi increasing}, $g$ is strictly increasing (hence $g \in \aut(\Q, <)$) and $\phi(., h)$ is increasing for every $h \in \mathcal{K}$. 
    
    We now show that $f$, $g^{-1}h$ and $\phi(., h)$ satisfy the conditions of 
    Lemma \ref{l:conjugacy in aut(Q)} to prove that $f$ and $g^{-1}h$ are 
    conjugate automorphisms for every $h \in \mathcal{K}$. We start by showing 
    that $g^{-1}h$ is good for every $h \in \mathcal{K}$. First of all, it has no fixed point, since $f$ has no fixed point and therefore \ref{i:s=1 <=> g < h, s=-1 <=> g > h} covers all cases, hence $g_n(p) \neq h(p)$ for any $p \in \Q$. 
    Now suppose towards a contradiction that there are distinct orbitals $O_1, 
    O_2 \in \mathcal{O}^*_{g^{-1}h}$ such that either $s_{g^{-1}h}(O_1) = 
    s_{g^{-1}h}(O_2) = 1$ or $s_{g^{-1}h}(O_1) = s_{g^{-1}h}(O_2) = -1$ and 
    there is no orbital $O_3 \in \mathcal{O}^*_{g^{-1}h}$ with $s_{g^{-1}h}(O_3) \neq s_{g^{-1}h}(O_1)$ between them. We suppose for the rest of the proof that $s_{g^{-1}h}(O_1) = s_{g^{-1}h}(O_2) = 1$, the case when they equal $-1$ is analogous. Note that in this case, 
    \begin{equation}
    \label{e:eq1}
    g(p) < h(p) \text{ for every } p \in (O_1 \cup O_2).
    \end{equation}
    
    Let $p' \in O_1$ and $p'' \in O_2$ be arbitrary. Then $\phi(p', h), \phi(p'', h) \in \mathcal{O}^*_f$. We consider the following two cases separately. 
    
    Case 1: $\phi(p', h) = \phi(p'', h)$. Let $O = \phi(p', h)$. Then, using 
    the fact that $\phi(., h)$ is increasing provided by \ref{i:phi increasing}, $\phi(p, h) = O$ 
    for every $p \in (p', p'')$. Using \eqref{e:eq1} and \ref{i:s=1 <=> g < 
      h, s=-1 <=> g > h}, $s_f(\phi(p', h)) = s_f(O) = 1$. Hence if $p \in (p', 
    p'')$ then $g(p) < h(p)$ using \ref{i:s=1 <=> g < h, s=-1 <=> g > h} and 
    the fact that $\phi(p, h) = O$. Let $n$ be large enough such that $p', p'' 
    \in H_n$ and let $\{r^1, \dots, r^m\} = H_n \cap [p', p'']$ where $p' = 
    r^1 < \dots < r^m = p''$. Then applying \ref{i:extension is good} to each 
    of the intervals $[r^j, r^{j + 1}]$, the facts that $h(r^j) > g_n(r^j)$ 
    and $h(r^{j + 1}) > g_n(r^{j + 1})$ imply $h(r) \ge g_n(r^{j + 1})$ for 
    every $r \in [r^j, r^{j + 1}]$. It follows (since $g$ is an increasing 
    extension of $g_n$) that $g^{-1}h(r^1) \ge r^2$. Using 
    induction, one can show with the same argument that $(g^{-1}h)^{m - 1}(r^1) 
    \ge r^m$, hence $(g^{-1}h)^{m - 1}(p') \ge p''$. This fact implies that 
    $p'$ and $p''$ are in the same orbital with respect to $g^{-1}h$, 
    contradicting our assumption. 
    
    Case 2: $\phi(p', h) \neq \phi(p'', h)$. Again using \eqref{e:eq1} and 
    \ref{i:s=1 <=> g < h, s=-1 <=> g > h} twice, $g(p') < h(p')$ and $g(p'') 
    < h(p'')$, hence $s_f(\phi(p', h)) = s_f(\phi(p'', h)) = 1$. Using the fact 
    that $f$ is good, there is $O \in \mathcal{O}^*_f$ between $\phi(p', h)$ and 
    $\phi(p'', h)$ with $s_f(O) = -1$, since there is no fixed point between 
    $O_1$ and $O_2$. Using that $\phi(., h)$ is increasing and surjective 
    provided by \ref{i:phi increasing} and \ref{i:surjective}, there is $p 
    \in (p', p'')$ with $\phi(p, h) = O$. 
    Then \ref{i:s=1 <=> g < h, s=-1 <=> g > h} ensures that $g(p) > h(p)$, 
    hence $g^{-1}h(p) < p$, therefore there exists $O' \in 
    \mathcal{O}^*_{g^{-1}h}$ with $O_1 < O' < O_2$ and $s_{g^{-1}h}(O') = -1$, 
    contradicting our assumptions. This completes the proof of the fact that 
    $g^{-1}h$ is good. 
    
    The function $\phi(., h) : \Q \to \mathcal{O}^*_f$ is increasing 
    and surjective using its construction and \ref{i:phi increasing}, 
    \ref{i:surjective}. Condition \ref{il:g(q) > q <=> s(phi(q)) = 1} and \ref{il:g(q) < q <=> s(phi(q)) = -1} of Lemma \ref{l:conjugacy in aut(Q)} follows easily from \ref{i:s=1 <=> g < h, s=-1 <=> g > h} and the surjectivity of $\phi(., h)$. 
    
    Therefore the conditions of Lemma \ref{l:conjugacy in aut(Q)} 
    are satisfied for $f$, $g^{-1}h$ and $\phi(., h)$, hence $f$ and $g^{-1}h$ are conjugate automorphisms for every $h \in \mathcal{K}$. This completes the proof of the proposition. 
  \end{proof}

  \begin{proof}[Proof of Theorem \ref{t:autQ}]
    First we show the ``only if'' part. Using Corollary \ref{c:fininf}, for almost every
    $f \in \aut(\Q, <)$, $\fix(f)$ is finite. Since the 
    cardinality of fixed points is the same for conjugate automorphisms, 
    it is clear that the conjugacy class of $f$ can only be non-Haar null if 
    $\fix(f)$ is finite. 
    
    The property that between any two distinct orbitals $O_1, O_2 \in 
    \mathcal{O}^*_f$ with either $s_f(O_1) = s_f(O_2) = 1$ or $s_f(O_1) = s_f(O_2) 
    = -1$, there exists an orbital $O_3 \in \mathcal{O}^*_f$ with $s_f(O_3) \neq 
    s_f(O_1)$, is also conjugacy invariant. Hence it is enough to prove the 
    following lemma to finish the ``only if'' part of the theorem.
    
    \begin{lemma}
      \label{l:prevalent aut(Q)}
      For almost every $f \in \aut(\Q, <)$, for distinct orbitals 
      $O_1, O_2 \in \mathcal{O}^*_f$ with $O_1 < O_2$ such that either $s_f(O_1) = 
      s_f(O_2) = 1$ or $s_f(O_1) = s_f(O_2) = -1$, there exists an orbital 
      $O_3 \in \mathcal{O}^*_f$ with $O_1 < O_3 < O_2$ and $s_f(O_3) \neq 
      s_f(O_1)$. 
    \end{lemma}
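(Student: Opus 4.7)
Since the bad set $A$ (automorphisms failing the lemma's conclusion) is conjugacy invariant and Borel, by Lemma~\ref{l:conjugacyinvariant} it suffices to show $A$ is left Haar null, which I plan to do via Christensen's theorem (Theorem~\ref{t:null}). Split $A = A^+ \cup A^-$ by the parity of the offending witness pair (the two cases are symmetric via inversion $f \mapsto f^{-1}$, which preserves Haar nullness), and focus on $A^+$. Cover $A^+$ by the countable family of $G_\delta$ sets $A^+_{p,q}$ indexed by $p < q$ in $\Q$, where $A^+_{p,q}$ consists of $f$ with $f(r) > r$ for all $r \in [p,q] \cap \Q$ and $f^n(p) < f^m(q)$ for all $n, m \in \Z$; the latter condition forces the orbitals of $p$ and $q$ to be disjoint (i.e., distinct elements of $\mathcal{O}^*_f$). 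Every $f \in A^+$ belongs to some $A^+_{p, q}$: choose $p \in O_1$ and $q \in O_2$ in its witness orbitals, and note that every orbital between $O_1, O_2$ has parity $+1$.

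For each $f \in A^+_{p, q}$, the forward orbit of $p$ is strictly increasing and bounded above (by any $f^m(q)$), hence $\alpha(f) := \sup_n f^n(p) \in (p, q)$ is an irrational fixed point of the continuous extension $\overline f : \R \to \R$ (irrational since any rational in $[p, q]$ satisfies $f > \id$, hence is not fixed); by density $\overline f \ge \id$ on $[p, q]$. For $g = f_1^{-1} f_2 \in (A^+_{p,q})^{-1} A^+_{p,q}$, a short computation gives $\overline g(\alpha(f_1)) \ge \alpha(f_1)$ and $\overline g(\alpha(f_2)) \le \alpha(f_2)$, so by the intermediate value theorem $\overline g$ has a fixed point in the real interval spanned by $\alpha(f_1)$ and $\alpha(f_2)$, which lies in $(p, q)$. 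Based on this, I propose $B := \{g \in \aut(\Q, <) : \overline g \text{ has no irrational fixed point in } \R\}$, which is conjugacy invariant (elements of $\aut(\Q, <)$ extend to homeomorphisms of $\R$ preserving the partition $\Q \sqcup (\R \setminus \Q)$), and which satisfies $1 \in \overline B$: for any finite $F \subset \Q$, one builds $g \in B$ fixing $F$ pointwise by arranging $\overline g > \id$ strictly on each component of $\R \setminus F$, so that $\fix \overline g = F$.

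The main obstacle is verifying the disjointness $B \cap (A^+_{p,q})^{-1} A^+_{p,q} = \emptyset$, that is, that the IVT-produced fixed point of $\overline g$ is always irrational. This is immediate when $\alpha(f_1) = \alpha(f_2)$, as the common irrational point is then fixed by $\overline g$, but when they differ the IVT alone does not preclude a rational fixed point of $\overline g$ at some $\gamma \in (p, q) \cap \Q$ where $\overline{f_1}, \overline{f_2}$ happen to coincide. Resolving this requires a finer analysis of the closed, irrational-valued fixed sets $\fix \overline{f_i} \cap [p, q]$: one plausible route is to compare the supremum of $\fix \overline{f_1} \cap (-\infty, \alpha(f_2)]$ with the nearest point of $\fix \overline{f_2}$ above it, forcing a common irrational fixed point of $\overline{f_1}$ and $\overline{f_2}$ and hence of $\overline g$. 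If this direct argument falls short, $B$ must be strengthened (for example by additionally ruling out isolated rational sign-changes of $\overline g$) at the cost of a more delicate density verification of $1 \in \overline B$. Once this technical step is settled, Theorem~\ref{t:null} gives $A^+$ Haar null, the symmetric argument gives $A^-$ Haar null, and as the union of two Haar null sets is Haar null, the lemma follows.
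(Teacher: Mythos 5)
Your overall strategy — invoking Theorem~\ref{t:null} with the cover by sets $A^{+}_{p,q}$ indexed by rational pairs $p<q$, and a test set $B$ accumulating at the identity — is exactly the paper's, and the cover is set up correctly. The gap you flag at the end is real, not a technicality that goes away with more care: the set $B = \{ g : \fix\overline g \subset \Q\}$ alone does not give the disjointness $B\cap (A^+_{p,q})^{-1}A^+_{p,q}=\emptyset$. Indeed one can build $f_1, f_2 \in A^+_{p,q}$ whose real extensions agree exactly on a finite set of rationals: with $\alpha(f_2)<\alpha(f_1)$, force $\overline{f_1}-\overline{f_2}$ to vanish only at one rational $\gamma\in(\alpha(f_2),\alpha(f_1))$ (sending $\gamma$ to a common rational value) and nowhere at irrationals, including outside $[p,q]$. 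Then $\fix\overline{(f_1^{-1}f_2)}\subset\Q$, so $f_1^{-1}f_2\in B$. The IVT does produce a fixed point of $\overline g$ in $(p,q)$, but nothing forces it to be irrational.

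The missing ingredient is precisely the extra condition the paper writes into its $\mathcal B$: $g(r)\ge r$ for all $r\in\Q$ (the paper's $\mathcal B$ consists of $g\ge\id$ with finitely many fixed points and no ``irrational'' ones). With $B'=\{g : g\ge\id \text{ and }\fix\overline g\subset\Q\}$ the disjointness is a one-liner and the IVT is not needed: if $g=f_1^{-1}f_2\in B'$ then $\overline{f_2}\ge\overline{f_1}$ pointwise on $\R$, while $\overline{f_1}\ge\id$ on $[p,q]$ gives $\overline{f_1}(\alpha(f_2))\ge\alpha(f_2)=\overline{f_2}(\alpha(f_2))\ge\overline{f_1}(\alpha(f_2))$; hence $\alpha(f_2)$ is an irrational fixed point of $\overline{f_1}$, and therefore of $\overline g$, contradicting $g\in B'$. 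Conjugacy invariance and $\id\in\overline{B'}$ go as you sketched (fix a finite rational set and make $g>\id$ strictly elsewhere, including on both unbounded tails so no irrational fixed point appears at infinity). Your proposed patch of excluding rational sign-changes is essentially the same: combined with $\fix\overline g\subset\Q$, ``no sign change'' forces $g\ge\id$ or $g\le\id$, and the $g\le\id$ case is symmetric via $\alpha(f_1)$. Once you add this condition, your analytic verification through $\alpha(f_2)$ is genuinely different from, and shorter than, the paper's, which argues combinatorially by examining whether the cut $I$ generated by the $g$-orbit of $p$ splits an $f$-orbital; the paper's route also needs $\fix g$ finite for the ``greatest/least fixed point in $I$'' step, a hypothesis your argument does not use.
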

    \begin{proof}
      Let $\mathcal{H}$ be the set of those automorphisms that satisfy the 
      property in the lemma, and let $\mathcal{A}$ denote the set 
      of functions $f \in \mathcal{H}^c$ such that 
      we can choose distinct orbitals $O_1^f, O_2^f \in \mathcal{O}^*_f$ 
      such that $O^f_1 < O^f_2$, $s_f(O_1^f) = s_f(O_2^f) = 1$, and between 
      $O_1^f$ and $O_2^f$, there is no orbital $O_3 \in \mathcal{O}^*_f$ with 
      $s_f(O_3) \neq 1$. 
      Also, let us denote by $\mathcal{A}'$ the set of functions $f \in 
      \mathcal{H}^c$, such that we can choose distinct orbitals $O_1^f, O_2^f 
      \in \mathcal{O}^*_f$ such that $O^f_1 < O^f_2$, $s_f(O_1^f) = s_f(O_2^f) = 
      -1$, and between $O_1^f$ and $O_2^f$, there is no orbital $O_3 \in 
      \mathcal{O}^*_f$ with $s_f(O_3) \neq -1$. 
      We show that $\mathcal{A}$ is Haar null and the same can be proved 
      similarly for $\mathcal{A}'$. Since it is easy to see that $\mathcal{H}^c = 
      \mathcal{A} \cup \mathcal{A}'$, proving this will finish the proof of the 
      lemma.
      
      We use Theorem \ref{t:null} to show that the conjugacy invariant set 
      $\mathcal{A}$ is Haar null. Let $(p_0, q_0), (p_1, q_1), \dots$ be an 
      enumeration of all pairs $(p, q)$ 
      with $p < q$, and for all $n \in \omega$, let 
      \begin{equation*}
      \begin{split}
      \mathcal{A}_n =& \{f \in \aut(\Q, <) : \text{$p_n$ and $q_n$ are in distinct 
        orbitals with respect to $f$} \\ 
      &\text{and $f(r) > r$ for every $r \in [p_n, 
        q_n]$}\} \\
      =& \bigcap_{k \in \Z} \bigcap_{r \in [p_n, q_n]} \{f \in \aut(\Q, <) : 
      \text{$f^k(p_n) < q_n$ and $f(r) > r$}\}.
      \end{split}
      \end{equation*}
      Note that $\mathcal{A} = \bigcup_{n \in \omega} \mathcal{A}_n$ and 
      $\mathcal{A}_n$ is Borel for every $n \in \omega$. 
      Using Theorem \ref{t:null}, it is enough to show that there is a 
      conjugacy invariant set $\mathcal{B}$ with $1 = \id_{\Q} \in 
      \overline{\mathcal{B}}$ and $\mathcal{B} \cap \bigcup_{n \in \omega} 
      \mathcal{A}_n^{-1}\mathcal{A}_n = \emptyset$. Let 
      \begin{equation*}
      \begin{split}
      \mathcal{B} = \{ & f \in \aut(\Q, <) : \\ &\text{$\fix(f)$ is finite, 
        $|\mathcal{O}^*_f| = 2|\fix(f)| + 1$ and $f(r) \ge r$ for every $r \in 
        \Q$}\}.
      \end{split}
      \end{equation*}
      Note that the condition $|\mathcal{O}^*_f| = 2|\fix(f)| + 1$ essentially 
      states that between neighbouring fixed points, every point is in the same 
      orbital (roughly speaking, this means that there are no ``irrational fixed 
      points''). 
      
      It is easy to see that $\mathcal{B}$ is a conjugacy invariant set, 
      and also that $\id_\Q \in \overline{\mathcal{B}}$. 
      Let $n \in \omega$ be arbitrary, it remains to show that if $f$, $g \in 
      \mathcal{A}_n$ then $f^{-1}g \not \in \mathcal{B}$. Let $O$ be the orbit of 
      $p_n$ with respect to $g$ and let $I = \{r \in \Q : r \le r' \text{ for 
        some $r' \in O$}\}$. Note that $I$ is convex, and since $g(r) > r$ for 
      every $r \in (p_n, q_n)$ but $p_n$ and $q_n$ are in different orbitals 
      (with respect to both $f$ and $g$), $q_n \not \in I$. 
      
      There are two cases with respect to the relationship of $I$ and the 
      orbitals of $f$. Suppose first that $I$ does not split orbitals of $f$, 
      that is, there is no $r \in I$ and $k \in \Z$ such that $f^k(r) \not \in 
      I$. Then the sets $I$ and $\Q \setminus I$ are invariant under both $f$ and 
      $g$ (and $f^{-1}$ and $g^{-1}$), thus $I$ does not split any orbitals of 
      $f^{-1}g$. 
      Moreover, $I$ has no greatest element, nor $\Q \setminus I$ has a least 
      element, since any such element would need to be a fixed point of $g$, but 
      $g$ does not have a fixed point in the interval $(p_n, q_n)$. Now suppose 
      that $f^{-1}g \in \mathcal{B}$. Then it has a greatest fixed point (if any) 
      that belongs to $I$ and a least fixed point (if any) that belongs to $\Q 
      \setminus I$, hence between the two, every point is in the same orbital. 
      This contradicts the fact that $I$ does not split the orbitals of $f^{-1}g$.
      
      Now suppose that $I$ splits an orbital of $f$, thus there exist 
      $r \in I$ and $k \in \Z$ such that $f^k(r) \not \in I$. Since $f(r) > r$ 
      for every $r \in (p_n, q_n)$, it follows that there is an $r \in (p_n, 
      \infty) \cap I$ such that $f(r) \not \in I$. Then $g^{-1}(f(r)) \not \in 
      I$, since $I$ does not split orbitals of $g$. By setting $r' = 
      g^{-1}(f(r))$, we see that $f^{-1}g(r') = r \in I$, thus $f^{-1}g(r') < 
      r'$, hence $f^{-1}g \not \in \mathcal{B}$ also in this case, finishing the 
      proof of the lemma. 
    \end{proof}
    
    Now we prove the ``if'' part of the theorem. Let $f$ be a good automorphism. We already showed in Proposition \ref{p:cpct translated into conj class of fixed-point free} that if $f$ has no fixed points then its conjugacy class is compact catcher. Therefore, using also Fact \ref{f:biter}, it is enough to show that $\mathcal{C}$, the conjugacy class of $f$ is compact biter. 
    
    To show this, take a non-empty compact set $\mathcal{F} \subset \aut(\Q, <)$. In our proof, we partition $\Q$ 
    into finitely many intervals bounded by the fixed points of $f$, and 
    on each interval, we define a suitable part of an automorphism $g$, towards showing that $\mathcal{C}$ is compact biter. We construct $g$ using Proposition \ref{p:cpct translated into conj class of fixed-point free}.
    
    Let $\{p_1, p_2, \dots, p_{k-1}\}$ be the set of fixed points of $f$ 
    (which is necessarily finite) with $p_1 < p_2 < \dots < p_{k-1}$. We now 
    choose $q_1, q_2, \dots, q_{k-1} \in \Q$ such that if we set $\mathcal{U} = 
    \{h \in \aut(\Q, <) : \forall i (h(p_i) = q_i)\}$ then $\mathcal{U} \cap 
    \mathcal{F} \neq \emptyset$. Let $\mathcal{K} = \mathcal{U} \cap 
    \mathcal{F}$. Note that $\mathcal{U}$ is closed, so $\mathcal{K}$ is compact, and $\mathcal{U}$ is also open, so it is enough to construct an automorphism $g$ with $\mathcal{K} \subset g\mathcal{C}$ to finish the proof of the theorem. 
    
    Let us use the notation $p_0 = q_0 = -\infty$ and $p_k = q_k = +\infty$. For each $i < k$ let $\pi_i : (p_i, p_{i + 1}) \to \Q$, $\rho_i : (q_i, q_{i + 1}) \to \Q$ be increasing bijections, $\mathcal{K}_i = \{h \restriction (p_i, p_{i + 1}) : h \in \mathcal{K}\}$ and $f_i = f \restriction (p_i, p_{i + 1})$. Then $\rho_i \mathcal{K}_i \pi_i^{-1} \subset \aut(\Q, <)$ is compact, and one can easily see that $\pi_i f_i \pi_i^{-1}$ is a good automorphism without fixed points for each $i < k$. Using Proposition \ref{p:cpct translated into conj class of fixed-point free} there are automorphisms $g_i\in \aut(\Q, <)$ for each $i < k$ such that 
    \begin{equation}
      \label{e:g_i K_i subset C_i}
      g_i^{-1}\rho_i \mathcal{K}_i \pi_i^{-1} \subset \mathcal{C}_i,
    \end{equation}
    where $\mathcal{C}_i$ is the conjugacy class of $\pi_i f_i \pi_i^{-1}$. Let 
    $$g = \{(p_i, q_i) : 1 \le i \le k - 1\} \cup \bigcup_{i < k} \rho_i^{-1}g_i\pi_i.$$
    It is clear that $g \in \aut(\Q, <)$, we now show that $g^{-1} \mathcal{K} \subset \mathcal{C}$ to finish the proof of the theorem. 
    
    Let $h \in \mathcal{K}$ and $h_i = h \restriction (p_i, p_{i + 1})$. Using \eqref{e:g_i K_i subset C_i}, we can find an automorphism $k_i \in \aut(\Q, <)$ for each $i < k$  with 
    \begin{equation}
      \label{e:k_i conjugates}
      g_i^{-1}\rho_i h_i \pi_i^{-1} = k_i^{-1}\pi_i f_i \pi_i^{-1}k_i.
    \end{equation}
    Let
    $$k = \{(p_i, p_i) : 1 \le i \le k - 1\} \cup \bigcup_{i < k} \pi_i^{-1}k_i\pi_i,$$
    it is enough to show that $g^{-1}h = k^{-1} f k$. Let $p \in \Q$. If $p = p_i$ for some $i < k$, $i \ge 1$, then $(g^{-1}h)(p) = p = (k^{-1}fk)(p)$. Otherwise, choose $i < k$ with $p \in (p_i, p_{i + 1})$. Then using \eqref{e:k_i conjugates},
    \begin{align*}
      (g^{-1}h)(p) &= (\pi_i^{-1}g_i^{-1}\rho_ih_i)(p) = (\pi_i^{-1}g_i^{-1}\rho_ih_i\pi_i^{-1}\pi_i)(p) = (\pi_i^{-1}k_i^{-1}\pi_i f_i \pi_i^{-1}k_i \pi_i)(p) \\
      &= (k^{-1}fk)(p).
    \end{align*}

    And thus the proof of the theorem is complete.  
  \end{proof}
  \begin{proof}[Proof of Theorem \ref{t:autQ union of null}]
    Using Theorem \ref{t:autQ}, the union of the Haar null conjugacy classes is exactly the set of the automorphisms with infinitely many fixed points together with those that violate the condition of Lemma \ref{l:prevalent aut(Q)}. The former set is Haar null using Theorem \ref{t:FACP <-> finfin -> infinf}, and the latter is Haar null by Lemma \ref{l:prevalent aut(Q)}. Hence the union of the two is also Haar null. 
  \end{proof}

  \bigskip
  \textbf{Acknowledgements.} We would like to thank to R. Balka, Z. Gyenis, A. Kechris, C. Rosendal, S. Solecki and P. Wesolek for many valuable remarks and discussions.
  
  \bibliographystyle{apalike}
  \bibliography{autQ}

\end{document}